\newtheorem{thm}{Theorem}[section]
\newtheorem{prop}[thm]{Proposition}
\newtheorem{cor}[thm]{Corollary}
\newtheorem{lem}[thm]{Lemma}
\newtheorem*{conjecture}{Conjecture}
\theoremstyle{definition}
\newtheorem*{remark}{Remark}
\numberwithin{equation}{section}
\def\Gr{\mathscr{G}}
\def\A{\mathcal{A}}
\def\E{\mathcal{E}}
\def\U{\mathsf{U}}
\def\D{\mathsf{D}}
\def\H{\mathsf{H}}
\def\Dyck{\textup{Dyck}}
\def\Schr{\textup{Schr}}
\def\abs#1{\lvert#1\rvert}
\DeclareMathOperator{\bin}{bin}
\DeclareMathOperator{\des}{des}
\DeclareMathOperator{\inv}{inv}
\DeclareMathOperator{\id}{id}
\DeclareMathOperator{\val}{val}
\def\oeis#1{\cite[#1]{Sloane}}
\def\R{\rule[-1ex]{0ex}{3.6ex}}
\tikzstyle{mesh}=[pattern=north east lines, pattern color=gray, draw=gray]
\tikzstyle{meshPatt}=[pattern=north west lines, pattern color=black!30!green, draw=gray]
\tikzstyle{meshMost}=[pattern=crosshatch, pattern color=black!10!orange!70, draw=gray]
\newcommand\dyckpath[3]{
\def\diam{0.08}
  \begin{scope}
    \draw[help lines] (#1) -- ++(#2*2,0);
    \draw[line width=1pt] (#1) foreach \dir in {#3}{ -- ++(\dir*90-45:1.41)};
    \draw[fill] (#1) circle (\diam);
    \draw[fill] (#1) foreach \dir in {#3}{ ++(\dir*90-45:1.41) circle (\diam)};
  \end{scope}
}
\begin{document}
\title{Restricted Grassmannian permutations}
\author[Juan Gil]{Juan B. Gil}
\address{Penn State Altoona\\ 3000 Ivyside Park\\ Altoona, PA 16601}
\email{jgil@psu.edu}

\author[Jessica Tomasko]{Jessica A. Tomasko}
\address{Penn State Altoona\\ 3000 Ivyside Park\\ Altoona, PA 16601}
\email{jat5880@psu.edu}

\begin{abstract}
A permutation is called Grassmannian if it has at most one descent. In this paper, we investigate pattern avoidance and parity restrictions for such permutations. As our main result, we derive formulas for the enumeration of Grassmannian permutations that avoid a classical pattern of arbitrary size. In addition, for patterns of the form $k12\cdots(k-1)$ and $23\cdots k1$, we provide combinatorial interpretations in terms of Dyck paths, and for $35124$-avoiding Grassmannian permutations, we give an explicit bijection to certain pattern-avoiding Schr\"oder paths. Finally, we enumerate the subsets of odd and even permutations and discuss properties of their corresponding Dyck paths.   
\end{abstract}

\maketitle

\section{Introduction}

A {\em Grassmannian permutation} is a permutation having at most one descent. They are a special type of vexillary permutation ($2143$-avoiding) that is associated to Schubert varieties, cf.\ \cite{LS85, Lenart00, Mac91}. If $\Gr_n$ denotes the set of Grassmannian permutations on $[n]=\{1,\dots,n\}$, then $\pi\in\Gr_n$ if and only if $\pi^{rc}\in\Gr_n$, where $\pi^{rc}$ is the reverse complement of $\pi$. We let $\Gr_n(\sigma)$ be the subset of Grassmannian permutations on $[n]$ that avoid the pattern $\sigma$.

Every subset $A$ of $[n]$ gives rise to an element of $\Gr_n$ obtained by listing the elements of $A$ in increasing order, followed by the elements of the complement of $A$, also listed in increasing order. This construction gives $2^n$ permutations, including $n+1$ copies of the identity permutation $\id_n=1\cdots n$. Thus,
\[ \abs{\Gr_n}=2^n-n \;\text{ for } n\ge 1. \]
The first few terms are $1, 2, 5, 12, 27, 58, 121, 248, 503, 1014,\dots$. See \oeis{A000325} for other combinatorial interpretations of this sequence. Moreover, if $n>1$ and $k\in[n-1]$, then every $\pi\in\Gr_n$ with $\pi(k)>\pi(k+1)$ must be of the form 
\begin{gather*}
 \pi = \pi(1)\cdots \pi(k) \,|\, 1\, \pi(k+2)\cdots\pi(n) \;\text{ or }\\
  \pi = 1\cdots j\, \pi(j+1)\cdots \pi(k) \,|\, (j+1) \pi(k+2) \cdots \pi(n) \text{ for some } j\le k-1,
\end{gather*} 
where the symbol $|$ is just used to indicate where the descent of $\pi$ happens. Therefore, there are $\sum_{j=0}^{k-1} \binom{n-j-1}{k-j}$ permutations in $\Gr_n$ having their descent at position $k$.

Equivalently, Grassmannian permutations can be defined using their Lehmer code. Recall that for $\pi\in S_n$, the code of $\pi$ is the word $L(\pi) = c_1(\pi)\cdots c_n(\pi)$, where
\[ c_i(\pi) = \#\{j: j>i \text{ and } \pi(j)<\pi(i)\}. \]
Note that $c_i(\pi)>c_{i+1}(\pi)$ if and only if $\pi(i)>\pi(i+1)$, so $\pi\in\Gr_n$ if and only if there is a $k$ such that $c_1(\pi)\le\cdots\le c_{k}(\pi)$ and $c_i(\pi)=0$ for $i > k$.

The goal of this paper is to study the enumeration of pattern-avoiding Grassmannian permutations as well as other related subsets of $\Gr_n$, including involutions, biGrassmannian permutations (i.e.\ $\Gr_n \cap\Gr_n^{-1}$), and permutations with an even or odd number of inversions. Most of the arguments presented here are combinatorial, and for the most part, we will use the standard notation found in the book by Kitaev~\cite{Kitaev11}. 

In Section~\ref{sec:GrassPerms}, we review some basic (partly known) results regarding the sets $\Gr_n \cap \Gr_n^{-1}$ and $\Gr_n \cup \Gr_n^{-1}$, and discuss the structure and enumeration of Grassmannian involutions.

Our main results are presented in Section~\ref{sec:avoidPattern}, where we discuss pattern avoidance for patterns of arbitrary size. It turns out that the cardinality of $\Gr_n(\sigma)$ only depends on $\des(\sigma)$, the number of descents of $\sigma$. More precisely, $\Gr_n(\sigma)=\Gr_n$ if $\des(\sigma)>1$, $\abs{\Gr_n(\sigma)}$ is finite if $\des(\sigma)=0$, and if $|\sigma|=k$ and $\des(\sigma)=1$, then $\abs{\Gr_n(\sigma)}  = 1 + \sum_{j=3}^k\binom{n}{j-1}$. For the finite class $\Gr_n(1\cdots k)$, we discuss some formulas and prove that $\abs{\Gr_{2k-2}(12\cdots k)} = \frac{1}{k}\binom{2k-2}{k-1} = C_{k-1}$, giving yet another (possibly new) interpretation of the Catalan numbers.

In Section~\ref{sec:bijections}, we give a bijective map between $\Gr_n$ and Dyck paths of semilength $n$ having at most one long ascent\footnote{A Dyck path is a lattice path from $(0,0)$ to $(2n,0)$ using up-steps $\U=(1,1)$, down-steps $\D=(1,-1)$, never going below the $x$-axis. A long ascent is a sequence of two or more up-steps.}, and use that map to provide combinatorial interpretations for $\Gr_n(k12\cdots(k-1))$ and $\Gr_n(23\cdots k1)$. In addition, we use the Lehmer code to establish a connection between the elements of $\Gr_n(35124)$ and certain pattern-avoiding Schr\"oder paths discussed in \cite{CioFer17}.

Finally, in Section~\ref{sec:evenoddPerms}, we discuss odd and even Grassmannian permutations. For these families, we derive enumeration formulas and provide combinatorial interpretations in terms of Dyck paths. The study of pattern avoidance for parity restricted permutations is more delicate and will be pursued in future work.

\section{Grassmannian related permutations}
\label{sec:GrassPerms}

A permutation $\pi$ is called {\em biGrassmannian} if $\pi$ and $\pi^{-1}$ are both Grassmannian. Observe that if $\pi$ is Grassmannian then $\pi^{-1}$ has at most one {\em dip}, that is a pair $(i,j)$ with $i<j$ such that $\pi(i)=\pi(j)+1$. In other words, a permutation is biGrassmannian if and only if it has at most one descent and at most one dip. For example, $\pi=2413\in\Gr_4$ has two dips and is the only Grassmannian permutation of size 4 that is not biGrassmannian.

\begin{prop}\label{prop:biGrass}
A Grassmannian permutation is biGrassmannian if and only if it avoids the pattern $2413$. In other words, $\Gr_n \cap \Gr_n^{-1} = \Gr_n(2413)$ for every $n\in\mathbb{N}$. Moreover,
\[ \abs{\Gr_n \cap \Gr_n^{-1}} = 1 + \binom{n+1}{3}. \]
\end{prop}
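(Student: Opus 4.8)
The plan is to establish the two claims separately: first the characterization $\Gr_n \cap \Gr_n^{-1} = \Gr_n(2413)$, then the enumeration.

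For the characterization, I would argue both inclusions. If $\pi \in \Gr_n$ contains $2413$, I want to show $\pi^{-1}$ has at least two descents (equivalently, $\pi$ has at least two dips), hence $\pi \notin \Gr_n^{-1}$. The key observation, already hinted in the excerpt, is that a pattern occurrence $\pi(a)\pi(b)\pi(c)\pi(d)$ of type $2413$ — so $\pi(c) < \pi(a) < \pi(d) < \pi(b)$ with $a<b<c<d$ — forces two "inversions of consecutive values" in disjoint value-ranges: one can locate a dip among the values between $\pi(c)$ and $\pi(a)$ and another among the values between $\pi(d)$ and $\pi(b)$, and these are genuinely distinct dips. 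Conversely, if $\pi \in \Gr_n$ has two distinct dips $(i_1,j_1)$ and $(i_2,j_2)$, I would extract a $2413$ pattern. Here the structural description from the introduction is the main tool: writing $\pi = \pi(1)\cdots\pi(k)\,|\,\pi(k+1)\cdots\pi(n)$ with its unique descent at position $k$, both increasing runs are determined by the subset $A = \{\pi(1),\dots,\pi(k)\}$, and a dip $(i,j)$ with $i \le k < j$ corresponds to a value $v$ with $v \in A$ but $v+1 \notin A$ (roughly). Having two such "breaks" in $A$ lets me pick four indices realizing $2413$. I would phrase this cleanly in terms of $A$ rather than chasing indices.

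For the enumeration, the cleanest route is to count the subsets $A \subseteq [n]$ whose associated Grassmannian permutation is $2413$-avoiding, being careful about the overcounting of the identity (which arises from $n+1$ subsets, namely the prefixes $\emptyset, \{1\}, \{1,2\},\dots,[n]$). Given the dip analysis above, $\pi_A$ is biGrassmannian iff $A$ is an interval of $[n]$ \emph{or} the complement of $A$ is an interval — equivalently, $A$ (listed in order) together with its complement produces at most one dip. I expect that counting such $A$ up to the identity identification gives exactly $1 + \binom{n+1}{3}$; the $\binom{n+1}{3}$ should come from choosing an interval $[a,b] \subseteq [n]$ of a certain kind (there are $\binom{n+1}{2}$ sub-intervals, so a further refinement or a different parametrization — perhaps counting by the position $k$ of the descent together with the single "break" location — will be needed to land on $\binom{n+1}{3}$). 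Alternatively, I could sum the count of biGrassmannian permutations with descent at position $k$ over $k$ and recognize the hockey-stick identity $\sum_k \binom{k}{2} = \binom{n+1}{3}$ (up to boundary adjustments).

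The main obstacle I anticipate is the bookkeeping in both directions of the characterization: precisely matching "a $2413$ occurrence" with "two distinct dips" requires care because a single pair of consecutive values can be involved in the pattern in more than one way, and one must ensure the two dips extracted really are distinct and really do force two descents in $\pi^{-1}$. Once the dip/subset dictionary is set up correctly, the enumeration should reduce to a short binomial identity. I would therefore invest most of the effort in a clean lemma translating "$\pi_A$ is $2413$-avoiding" into a transparent condition on the subset $A$, and let both the characterization and the count follow from it.
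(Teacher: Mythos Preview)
Your plan for the characterization is close to the paper's, but the paper handles one inclusion more efficiently: rather than arguing directly that a $2413$ occurrence in $\pi$ forces two dips, it simply observes that every Grassmannian permutation avoids $3142$ (which has two descents), so $\Gr_n^{-1}\subset S_n(3142)^{-1}=S_n(2413)$, giving $\Gr_n\cap\Gr_n^{-1}\subset\Gr_n(2413)$ in one line. Your direct value-range argument would also work, but is unnecessary labor. The other inclusion (two dips force a $2413$ pattern) is essentially the same in both approaches.

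There is, however, a genuine error in your enumeration plan. You assert that $\pi_A$ is biGrassmannian iff $A$ or its complement is an interval of $[n]$, and claim this is equivalent to the one-dip condition. It is not: for $n=4$ and $A=\{1,3\}$ we get $\pi_A=1324$, a biGrassmannian involution, yet neither $\{1,3\}$ nor $\{2,4\}$ is an interval. The correct translation of ``at most one dip'' is that the characteristic word of $A$ has at most one occurrence of the factor $01$ (equivalently, at most one $v$ with $v\notin A$ and $v+1\in A$); such words have the shape $1^a0^b1^c0^d$. Counting those with \emph{exactly} one $01$ (i.e.\ $b,c\ge1$, $a,d\ge0$, $a+b+c+d=n$) gives $\binom{n+1}{3}$ non-identity biGrassmannian permutations directly via stars-and-bars, and adding the identity yields the formula. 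So your subset approach does succeed once the interval mischaracterization is dropped, and it is arguably cleaner than the paper's route, which instead performs an explicit case analysis on the shape of $\pi$ (identity; descent at position $1$; three further sub-cases for later descents) and then collapses $1+\binom{n-1}{1}+2\binom{n-1}{2}+\binom{n-1}{3}$ into $1+\binom{n+1}{3}$.
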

\begin{proof}
Since $3142$ has two descents, we have $\Gr_n\subset S_n(3142)$. Thus $\Gr_n^{-1}\subset S_n(2413)$ and therefore, $\Gr_n\cap \Gr_n^{-1}\subset \Gr_n(2413)$. Conversely, suppose $\pi$ is Grassmannian and has two dips, say $(i_1,j_1)$ and $(i_2,j_2)$ with $i_1<i_2$. Since $\pi$ has at most one descent and avoids a $321$ pattern, we must have
\[ i_1< i_2 < j_1 < j_2 \;\text{ and }\; \pi(j_1) < \pi(i_1)< \pi(j_2) < \pi(i_2), \] 
which gives a $2413$ pattern. In other words, $\Gr_n(2413)\subset \Gr_n^{-1}$ and so $\Gr_n(2413)\subset \Gr_n\cap \Gr_n^{-1}$. In conclusion, $\Gr_n \cap \Gr_n^{-1} = \Gr_n(2413)$.

For the enumeration of $\Gr_n \cap \Gr_n^{-1}$, we organize the permutations by the position of the descent (if any). There is only one permutation with no descent: $\pi = 1\cdots n$. Moreover, if $\pi\in \Gr_n \cap \Gr_n^{-1}$ starts with a descent, then it must be of the form 
\[ \pi = n\,1\cdots(n-1) \;\text{ or }\; \pi = i\,1\cdots(i-1)(i+1)\cdots n \;\text{ with } i\in\{2,\dots,n-1\}. \]
There are $\binom{n-1}{1}$ such permutations. 

Now, if $\pi$ has only one dip and its descent is at a position greater than 1, then its initial ascending run must consist of at most two consecutive runs with a gap to account for the dip. In other words, $\pi$ can only take one of the following forms:
\begin{enumerate}[a)]
\itemindent20pt
\item $\pi= \diamond\, i\cdots j \,|\, 1\,\tau_1$ \ with $i,j\in \{2,\dots,n\}$ and $i<j$;
\item $\pi=1\cdots (i-1)\diamond j \,|\, i\,\tau_2$ \ with $i,j\in \{2,\dots,n\}$ and $i<j$;
\item $\pi=1\cdots (i-1)\diamond j\cdots k \,|\, i\,\tau_3$ \ with $i,j,k\in \{2,\dots,n\}$ and $i<j<k$;
\end{enumerate}
where $\diamond$ indicates the gap in the first ascending run of $\pi$, and $\tau_1, \tau_2, \tau_3$ are increasing permutations. Cases a) and b) give $\binom{n-1}{2}$ permutations each, and case c) gives $\binom{n-1}{3}$ permutations with the desired properties.

In conclusion, there is a total of
\[ 1 + \binom{n-1}{1} + 2\binom{n-1}{2} + \binom{n-1}{3} = 1 + \binom{n}{2} +  \binom{n}{3} = 1 + \binom{n+1}{3}\]
biGrassmannian permutations of size $n$.
\end{proof}

Next, we consider the set $\Gr_n \cup \Gr_n^{-1}$ of Grassmannian permutations together with their inverses. This set can be described in terms of permutations avoiding a pair of patterns.
\begin{prop}
For $n\in\mathbb{N}$, we have $\Gr_n \cup \Gr_n^{-1} = S_n(321,2143)$. Moreover, 
\[ \abs{\Gr_n \cup \Gr_n^{-1}} = 2^{n+1}-\binom{n+1}{3}-2n-1. \]
\end{prop}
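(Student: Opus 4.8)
The plan is to prove the two halves of the statement separately: first the set-theoretic identity $\Gr_n \cup \Gr_n^{-1} = S_n(321,2143)$, and then the enumeration formula by inclusion–exclusion on the identity $\abs{\Gr_n \cup \Gr_n^{-1}} = \abs{\Gr_n} + \abs{\Gr_n^{-1}} - \abs{\Gr_n \cap \Gr_n^{-1}}$, using the known values $\abs{\Gr_n} = 2^n - n$ (recalled in the introduction) and $\abs{\Gr_n \cap \Gr_n^{-1}} = 1 + \binom{n+1}{3}$ (Proposition~\ref{prop:biGrass}).

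For the inclusion ``$\subseteq$'': a Grassmannian permutation has at most one descent, so it avoids $321$ (which has two descents) and it avoids $2143$ (which also has two descents) — this is exactly the vexillary remark already noted in the introduction. Hence $\Gr_n \subseteq S_n(321,2143)$. Since the pattern classes $\{321\}$ and $\{2143\}$ are both closed under taking inverses ($321^{-1} = 321$ and $2143^{-1} = 2143$), we get $\Gr_n^{-1} \subseteq S_n(321,2143)$ as well, and the union is contained in $S_n(321,2143)$. For the reverse inclusion ``$\supseteq$'': suppose $\pi \in S_n(321,2143)$ but $\pi \notin \Gr_n$, i.e.\ $\pi$ has at least two descents. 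I would argue that avoiding $321$ forces these descents to have a rigid structure, and then avoiding $2143$ forces $\pi^{-1}$ to have at most one descent. Concretely, if $\pi$ has descents at positions $i < k$, then $321$-avoidance means each descent is a drop of adjacent values in a controlled way; the key point is that a $321$-avoiding permutation with two or more descents whose inverse also has two or more descents must contain a $2143$ pattern. This is the heart of the argument and mirrors the dip-counting argument in Proposition~\ref{prop:biGrass}: a descent of $\pi$ corresponds to a descent of $\pi^{-1}$ after conjugation, and two ``independent'' descents on both sides assemble into the $2143$ pattern.

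The main obstacle I anticipate is making this reverse inclusion fully rigorous — specifically, showing that $S_n(321,2143) \subseteq \Gr_n \cup \Gr_n^{-1}$, i.e.\ that one cannot have a $321$- and $2143$-avoiding permutation with two descents whose inverse also has two descents. The cleanest route is probably: let $\pi \in S_n(321,2143)$, and suppose both $\pi$ and $\pi^{-1}$ have at least two descents. A descent of $\pi$ at position $i$ means $\pi(i) > \pi(i+1)$; a descent of $\pi^{-1}$ means there is a value $v$ with the position of $v$ exceeding the position of $v+1$, i.e.\ a dip of $\pi$. So the hypothesis is that $\pi$ has at least two descents and at least two dips. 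Using $321$-avoidance, one shows that a permutation with two descents necessarily has the local form where the second descent value can be combined with the first descent and the dip structure to extract four positions $a < b < c < d$ with $\pi$-values in the relative order $2143$. I would set this up by carefully choosing the witnesses (e.g.\ the two leftmost descent tops and the values just after them) and checking the order relations, exactly analogous to the chain $i_1 < i_2 < j_1 < j_2$ with $\pi(j_1) < \pi(i_1) < \pi(j_2) < \pi(i_2)$ used in the biGrassmannian proof.

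Finally, the enumeration follows by substitution:
\[
\abs{\Gr_n \cup \Gr_n^{-1}} = \abs{\Gr_n} + \abs{\Gr_n^{-1}} - \abs{\Gr_n \cap \Gr_n^{-1}} = 2(2^n - n) - \left(1 + \binom{n+1}{3}\right) = 2^{n+1} - 2n - 1 - \binom{n+1}{3},
\]
using $\abs{\Gr_n^{-1}} = \abs{\Gr_n}$ since inversion is a bijection. This last step is a routine calculation, so the entire weight of the proposition rests on the structural lemma in the previous paragraph; everything else is bookkeeping with facts already established in the excerpt.
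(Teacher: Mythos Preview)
Your proposal is correct and matches the paper's approach: the easy inclusion via the self-inverse patterns $321$ and $2143$, the reverse inclusion via a structural argument on $321$-avoiders with more than one descent, and the count via inclusion--exclusion using $\abs{\Gr_n}=2^n-n$ and Proposition~\ref{prop:biGrass}. The paper executes the reverse inclusion with a mesh-plot diagram showing directly that any $\pi\in S_n(321,2143)$ with more than one descent has exactly one dip (hence lies in $\Gr_n^{-1}$), which is precisely the lemma you flag as the crux, argued pictorially rather than by the witness-chasing you propose.
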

\begin{proof}
Clearly, $\Gr_n\subset S_n(321,2143)$ and so $\Gr_n^{-1}\subset S_n(321,2143)$ (note that $321$ and $2143$ are self-inverses). Hence $\Gr_n\cup\Gr_n^{-1}\subset S_n(321,2143)$. On the other hand, if $\pi\in S_n(321,2143)$ has more than one descent, then the plot of $\pi$ must be of the form

\medskip
\begin{center}
\begin{tikzpicture}[scale=0.6]
\node[below=1] at (1,0.25) {$i$};
\node[below=1] at (4,0.25) {$j$};
\clip (0.2,0.2) rectangle (4.8,4.8);
\draw[gray] (0,0) grid (5,5);
\draw[mesh] (1,0) rectangle (2,5); 
\draw[mesh] (3,0) rectangle (4,5);
\draw[meshPatt] (0,3) rectangle (1,5);
\draw[meshPatt] (4,0) rectangle (5,2);
\foreach \x/\y in {0/1,2/0,2/2,2/4,4/3}{\draw[meshPatt] (\x,\y) rectangle (1+\x,1+\y);}
\foreach \x/\y in {1/3,2/1,3/4,4/2}{\draw[fill] (\x,\y) circle (0.12);}
\end{tikzpicture}
\\
\tikz[scale=0.5]{
\draw[meshPatt] (0,0) rectangle (0.5,0.5);
\node at (3.15,0.25) {\scriptsize Pattern avoidance};
}
\hspace{1ex}
\tikz[scale=0.5]{
\draw[mesh] (0,0) rectangle (0.5,0.5);
\node at (1.8,0.25) {\scriptsize Descent};
}
\end{center}
where $\pi(i)$ is the top of the most-left descent and $\pi(j)$ is the bottom of the most-right descent of $\pi$. There cannot be any $21$ pattern entirely contained in any of the unshaded regions, and every element to the left of $\pi(i)$ and greater than $\pi(j)$ must be larger than any element to the right of $\pi(j)$ and less than $\pi(i)$. Therefore, $\pi$ has only one dip, hence $\pi\in \Gr_n^{-1}$. In conclusion, $S_n(321,2143) \subset \Gr_n \cup \Gr_n^{-1}$.

Finally, $\abs{\Gr_n}=\abs{\Gr_n^{-1}}$ implies $\abs{\Gr_n \cup \Gr_n^{-1}} = 2\abs{\Gr_n} - \abs{\Gr_n \cap \Gr_n^{-1}} = 2(2^n-n) - 1 - \binom{n+1}{3}$.
\end{proof}
The sequence $\big(\abs{\Gr_n \cup \Gr_n^{-1}}\big)_{n\in\mathbb{N}}$ starts with $1, 2, 5, 13, 33, 80, 185, 411, 885, 1862,\dots$; see \oeis{A088921} for other combinatorial interpretations.

\medskip
We end this section with a description and enumeration of Grassmannian involutions. Before we proceed, we recall the definitions of skew and direct sums of two permutations. The {\em skew sum}, denoted by $\pi_1\ominus\pi_2$, is the permutation obtained by listing the elements of $\pi_1$, each increased by the size of $\pi_2$, followed by the elements of $\pi_2$. The {\em direct sum},  $\pi_1\oplus\pi_2$, is the permutation obtained by listing the elements of $\pi_1$, followed by the elements of $\pi_2$ increased by the size of $\pi_1$.

\begin{prop}
A permutation $\pi\in\Gr_n$ is an involution if and only if it is of the form
\begin{equation*} 
 \pi = \id_{k_1} \oplus \big(\id_{k_2} \ominus \id_{k_2}\big) \oplus \id_{k_3} 
\end{equation*}
for some $k_1,k_2,k_3\in\mathbb{N}\cup\{0\}$ with $k_1+2k_2+k_3=n$, where $\id_0=\varepsilon$ is the empty word. Moreover, if $i_n$ denotes the number of Grassmannian involutions of size $n$, then
\begin{equation*}
 i_n = \begin{cases}
 \frac{n^2+3}{4} &\text{if $n$ is odd,}\\[3pt]
 \frac{n^2+4}{4} &\text{if $n$ is even.}
 \end{cases}
\end{equation*}
This sequence starts with $1, 2, 3, 5, 7, 10, 13, 17, 21, 26, 31, \dots$.
\end{prop}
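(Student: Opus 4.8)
The plan is to prove the structural description first and then read the enumeration off from it.

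For the \emph{if} direction I would simply note that $\id_{k_1}\oplus(\id_{k_2}\ominus\id_{k_2})\oplus\id_{k_3}$ is the product of the $k_2$ disjoint transpositions $(k_1+t,\,k_1+k_2+t)$, $t=1,\dots,k_2$, together with fixed points; hence it squares to the identity, and its only possible descent is at position $k_1+k_2$ (which is a genuine descent precisely when $k_2\ge1$), so it lies in $\Gr_n$.

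For the \emph{only if} direction, let $\pi\in\Gr_n$ be an involution. If $\pi$ has no descent it equals $\id_n$, the stated form with $k_2=0$; otherwise it has a unique descent at a position $k$, so $\pi(1)<\cdots<\pi(k)$ and $\pi(k+1)<\cdots<\pi(n)$. Since $\pi(1),\dots,\pi(i)$ are $i$ distinct positive integers, $\pi(i)\ge i$ for $i\le k$; since $\pi(i),\dots,\pi(n)$ are $n-i+1$ distinct elements of $[n]$, $\pi(i)\le i$ for $i>k$. Thus the set $E$ of strict-excedance positions is contained in $\{1,\dots,k\}$ and the set $D$ of strict-deficiency positions in $\{k+1,\dots,n\}$. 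Being an involution, $\pi$ restricts to mutually inverse, order-preserving bijections $E\to D$ and $D\to E$ (order-preserving because each is a restriction of one of the two increasing runs). Two local remarks then finish the job: if $i<i'\le k$ with $i\in E$, then $\pi(i)$ is a deficiency position, so $\pi(i)>k$, whence $\pi(i')>\pi(i)>k\ge i'$ and $i'\in E$; symmetrically, if $k<i<i'$ with $i'\in D$, then $\pi(i)<\pi(i')\le k<i$, so $i\in D$. Hence $E=\{j+1,\dots,k\}$ and $D=\{k+1,\dots,k+m\}$ with $m=|E|=|D|=k-j$, and the order-preserving bijection between them forces $\pi(j+t)=k+t$ and $\pi(k+t)=j+t$ for $t=1,\dots,m$, all remaining points fixed — exactly $\id_{j}\oplus(\id_{m}\ominus\id_{m})\oplus\id_{n-k-m}$.

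For the enumeration I would observe that the correspondence $(k_1,k_2,k_3)\mapsto\pi$ collapses all triples with $k_2=0$ to $\id_n$, whereas for $k_2\ge1$ the permutation recovers its window of non-fixed points $\{k_1+1,\dots,k_1+2k_2\}$ and hence the triple uniquely. So $i_n$ is $1$ plus the number of triples $(k_1,k_2,k_3)$ of nonnegative integers with $k_2\ge1$ and $k_1+2k_2+k_3=n$, namely
\[
 i_n = 1 + \sum_{k_2=1}^{\lfloor n/2\rfloor}(n+1-2k_2) = 1 + \Big\lfloor\frac n2\Big\rfloor\Big(n-\Big\lfloor\frac n2\Big\rfloor\Big);
\]
substituting $n=2p$ and $n=2p+1$ yields the two stated formulas $(n^2+4)/4$ and $(n^2+3)/4$, respectively. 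The main obstacle is the \emph{only if} step — pinning down that the excedance and deficiency positions form two consecutive, correctly aligned blocks; the displayed inequalities make this short, and the remaining two parts are routine.
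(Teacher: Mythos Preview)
Your proof is correct. The \emph{if} direction and the enumeration are essentially what the paper does (the paper also counts by the value of $k_2$, writing $i_n=1+\sum_{k=1}^{m}(n-(2k-1))=1+mn-m^2$ for $n=2m$ or $2m+1$, which is your $1+\lfloor n/2\rfloor(n-\lfloor n/2\rfloor)$).

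The \emph{only if} direction, however, follows a genuinely different route. The paper first strips the leading and trailing fixed points to write $\pi=\id_{k_1}\oplus\tau\oplus\id_{k_3}$ with $\tau$ not starting with $1$ or ending with $m=|\tau|$, so that $\tau$ is an increasing run ending in $m$ followed by an increasing run beginning with $1$; it then invokes the earlier result $\Gr_m\cap\Gr_m^{-1}=\Gr_m(2413)$ (Proposition~\ref{prop:biGrass}) to force $\tau=\id_\ell\ominus\id_r$, and the involution condition gives $\ell=r$. Your argument bypasses that proposition entirely, working directly with the excedance and deficiency sets and using the two monotone runs to show they are contiguous blocks that the involution pairs up. Your approach is self-contained and does not rely on the $2413$-avoidance characterization of biGrassmannian permutations, at the cost of a slightly longer elementary computation; the paper's approach is shorter here precisely because it cashes in on work already done.
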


\begin{proof}
First, it is clear that permutations of the claimed form are Grassmannian involutions. Suppose now that $\pi$ is an involution in $\Gr_n$. If $\pi$ is the identity, we can choose $k_1=n$ and $k_2=k_3=0$. Otherwise, we can decompose $\pi = \id_{k_1}\oplus\, \tau\oplus \id_{k_3}$ with $k_1,k_3\ge 0$, where $\tau$ is a Grassmannian involution of size $m\le n$ that does not start with 1 or end with $m$. Thus, $\tau$ consists of an increasing sequence ending with $m$ followed by an increasing sequence starting with $1$. Moreover, since $\tau\in \Gr_m \cap \Gr_m^{-1}=\Gr_m(2413)$, it must be of the form $\tau=\id_\ell\ominus\id_r$, and since $\tau$ is an involution, we must have $\ell=r$.

To enumerate the involutions in $\Gr_n$, we group them by the size of their indecomposable component $\tau$. For instance, there are $n-1$ involutions with $\tau=1\ominus 1$, $n-3$ involutions with $\tau=12\ominus 12$, and generally, $n-(2k-1)$ involutions with $\tau=\id_{k} \ominus \id_{k}$. We also have the identity (if $\tau=\varepsilon$). In conclusion, if $n=2m$ or $n=2m+1$, then we get a total of
\[ i_n = 1+\sum_{k=1}^m (n-(2k-1)) = 1 + mn - m^2\]
Grassmannian involutions, leading to the claimed formulas.
\end{proof}

\begin{remark}
Grassmannian involutions correspond to standard Young tableaux (SYT) having at most two rows, and whose second row consists of consecutive numbers. More precisely, if $k_2\not=0$, the permutation $ \id_{k_1} \oplus \big(\id_{k_2} \ominus \id_{k_2}\big) \oplus \id_{k_3}$ corresponds to the SYT of shape $(n-k_2,k_2)$ whose second row consists of the consecutive labels $k_1+k_2+1,\dots,k_1+2k_2$.

For example, the 10 Grassmannian involutions of size 6 give:

\bigskip
\begin{center}
\def\decom#1{\scriptsize\color{blue}$#1$}
\small
\begin{tabular}{ccccc}
\young(13456,2) & \young(12456,3) & \young(12356,4) & \young(12346,5) & \young(12345,6) \\
\decom{(1\ominus 1)\oplus 1234} & \decom{1\oplus(1\ominus 1)\oplus 123} & \decom{12\oplus(1\ominus 1)\oplus 12} & \decom{123\oplus(1\ominus 1)\oplus 1} & \decom{1234\oplus(1\ominus 1)} \\ 
2 1 3 4 5 6 & 1 3 2 4 5 6 & 1 2 4 3 5 6 & 1 2 3 5 4 6 & 1 2 3 4 6 5 \\[15pt]
\young(1256,34) & \young(1236,45) & \young(1234,56) & \young(123,456) & \young(123456) \\
\decom{(12\ominus 12)\oplus 12} & \decom{1\oplus(12\ominus 12)\oplus 1} & \decom{12\oplus(12\ominus 12)} & \decom{123\ominus 123} & \decom{123456} \\
 3 4 1 2 5 6 & 1 4 5 2 3 6 & 1 2 5 6 3 4 &  4 5 6 1 2 3 &  1 2 3 4 5 6
\end{tabular}
\end{center}
\end{remark}

\section{Pattern-avoiding Grassmannian permutations}
\label{sec:avoidPattern}

First of all, note that since Grassmanian permutations have at most one descent, they naturally avoid any pattern with more than one descent. Thus 
\[ \Gr_n = \Gr_n(\sigma) \;\text{ for every $\sigma$ with } \des(\sigma)>1. \]
Moreover, it can be easily checked that $\Gr_n(12\cdots k)=\varnothing$ if $n\ge 2k-1$. Thus $\abs{\Gr_n(12\cdots k)}$ gives a sequence with $2k-2$ nonzero elements. For example, $\abs{\Gr_n(123)}$ gives $1, 2, 4, 2, 0,\dots$, and $\abs{\Gr_n(1234)}$ gives $1, 2, 5, 11, 10, 5, 0,\dots$.  

Clearly,  $\abs{\Gr_{m}(12\cdots k)}=2^m-m$ for $m<k$, and $\abs{\Gr_{k}(12\cdots k)} = 2^k - k - 1$. On the other end, we have the appearance of the Catalan number $C_{k-1} = \frac{1}{k}\binom{2k-2}{k-1}$.

\begin{prop}
For $k\ge 2$, $\abs{\Gr_{2k-3}(12\cdots k)} = 2C_{k-1}$ and $\abs{\Gr_{2k-2}(12\cdots k)} = C_{k-1}$.
\end{prop}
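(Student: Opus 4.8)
The plan is to combine the explicit two‑run description of a Grassmannian permutation with a formula for its longest increasing subsequence, and then to recognize the resulting count as a number of Dyck paths; we may assume $k\ge 3$. The starting observation is that a permutation avoids $12\cdots k$ exactly when its longest increasing subsequence has length at most $k-1$. Now a non‑identity $\pi\in\Gr_n$ has a single descent, say at position $d$, and if $R\subseteq[n]$ is the set of values in positions $1,\dots,d$ and $R'=[n]\setminus R$ the set of values in positions $d+1,\dots,n$, then $\pi$ lists $R$ in increasing order followed by $R'$ in increasing order, with $\max R>\min R'$. Since each of the two runs is increasing, every increasing subsequence of $\pi$ arises by fixing a threshold $t$ and taking the elements of $R$ that are $\le t$ followed by the elements of $R'$ that are $>t$; maximizing over $t$ gives
\[
\operatorname{lis}(\pi)=\max_{0\le t\le n}\Bigl(\abs{R\cap[1,t]}+\abs{R'\cap(t,n]}\Bigr)=(n-d)+\max_{0\le t\le n}\Bigl(2\abs{R\cap[1,t]}-t\Bigr),
\]
the second equality using $\abs{R\cap[1,t]}+\abs{R'\cap[1,t]}=t$. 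In particular $\operatorname{lis}(\pi)\ge d$ and $\operatorname{lis}(\pi)\ge n-d$.

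Next I would use these two lower bounds to pin down the descent position. For $n=2k-2$, imposing $\operatorname{lis}(\pi)\le k-1$ forces $d\le k-1$ and $n-d\le k-1$, hence $d=k-1$, and then we need $2\abs{R\cap[1,t]}\le t$ for all $t$; so $\abs{\Gr_{2k-2}(12\cdots k)}$ equals the number of subsets $R\subseteq[2k-2]$ with $\abs{R}=k-1$ satisfying this inequality. (Evaluating it at $t=k-1$ already rules out $R=\{1,\dots,k-1\}$, i.e.\ the identity, so nothing is lost by having excluded it.) For $n=2k-3$ the same bounds give $d\in\{k-2,k-1\}$: when $d=k-2$ one needs $\abs{R}=k-2$ with $2\abs{R\cap[1,t]}\le t$ for all $t$, and when $d=k-1$ one needs $\abs{R}=k-1$ with $2\abs{R\cap[1,t]}\le t+1$ for all $t$; again each condition forces $R\ne\{1,\dots,d\}$.

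Finally I would translate each of these conditions on $R$ into a lattice path $w_1\cdots w_n$ by setting $w_i=\D$ when $i\in R$ and $w_i=\U$ when $i\in R'$, noting that the height of $w$ after $t$ steps is precisely $t-2\abs{R\cap[1,t]}$. In the case $n=2k-2$, $d=k-1$, the conditions say exactly that $w$ has $k-1$ up‑steps, $k-1$ down‑steps, and never drops below $0$, i.e.\ $w$ is a Dyck path of semilength $k-1$; this yields $\abs{\Gr_{2k-2}(12\cdots k)}=C_{k-1}$. For $n=2k-3$, the case $d=k-2$ produces length‑$(2k-3)$ paths from height $0$ to height $1$ staying $\ge0$ (bijective with Dyck paths of semilength $k-1$ by appending a down‑step), and the case $d=k-1$ produces length‑$(2k-3)$ paths from height $0$ to height $-1$ staying $\ge-1$ (bijective with Dyck paths of semilength $k-1$ by prepending an up‑step); summing the two subcases gives $\abs{\Gr_{2k-3}(12\cdots k)}=2C_{k-1}$. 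The main points requiring care are the derivation of the $\operatorname{lis}$ formula (including the boundary values $t=0$ and $t=n$) and the verification that each path condition automatically excludes the identity permutation; beyond that the argument is bookkeeping plus the standard fact that Dyck paths of semilength $m$ are counted by $C_m$.
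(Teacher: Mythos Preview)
Your argument is correct and takes a genuinely different route from the paper. The paper proves $\abs{\Gr_{2k-2}(12\cdots k)}=C_{k-1}$ by analyzing the Lehmer code: it shows that any such permutation has code $c_1\cdots c_{k-1}0^{k-1}$ with $i\le c_i\le c_{i+1}$ and $c_{k-1}=k-1$, and sends this to the Dyck path $\U^{c_1}\D\,\U^{c_2-c_1}\D\cdots\U^{c_{k-1}-c_{k-2}}\D$. For $n=2k-3$ the paper does not re-encode; instead it bijects each descent class $\A_{k-2}$ and $\A_{k-1}$ with $\Gr_{2k-2}(12\cdots k)$ by deleting the largest entry or the entry $1$, respectively. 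You instead work with the value set $R$ of the first ascending run, derive the closed formula $\operatorname{lis}(\pi)=(n-d)+\max_t\bigl(2\abs{R\cap[1,t]}-t\bigr)$, and read the constraint $\operatorname{lis}(\pi)\le k-1$ directly as a ballot condition on $R$, which you then encode as a Dyck (or near-Dyck) path. Your approach has the advantage of treating the two values of $n$ uniformly and of making the Catalan structure visible without passing through Lehmer codes; the paper's approach ties in with the Lehmer-code viewpoint used elsewhere in the article and gives a clean ``delete an extreme value'' bijection for the odd case. One small remark: you assume $k\ge 3$ without separately handling $k=2$; in fact the identity $\abs{\Gr_{2k-2}(12\cdots k)}=C_{k-1}$ is immediate for $k=2$, while the claim $\abs{\Gr_{2k-3}(12\cdots k)}=2C_{k-1}$ actually fails at $k=2$ (one has $\abs{\Gr_1(12)}=1\ne 2$), so your restriction is the right move.
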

\begin{proof}
Every $\pi\in\Gr_{2k-2}(12\cdots k)$ is of the form $\pi = \tau_1\,(2k-2)\,1\,\tau_2$, where $\tau_1$ and $\tau_2$ are increasing permutations with $\abs{\tau_1}=\abs{\tau_2}=k-2$. In addition, for each $i\in\{1,\dots,k-1\}$ there are at most $k-1-i$ elements greater than $\pi(i)$ and so at least $k-1-(k-1-i)=i$ inversions involving $i$. Therefore, the Lehmer code of $\pi$ must be of the form
\[ L(\pi) = c_1\cdots c_{k-1}\, 0^{k-1} \text{ with } c_{k-1}=k-1 \text{ and } i\le c_i\le c_j \text{ for } i<j. \]
The formula $\abs{\Gr_{2k-2}(12\cdots k)} = C_{k-1}$ follows now from the fact that these codes are in one-to-one correspondence with Dyck paths of semilength $k-1$ via the bijective map
\[ c_1c_2\cdots c_{k-1} \to \U^{c_1}\D \U^{c_2-c_1} \D\cdots \U^{c_{k-1}-c_{k-2}} \D. \]

On the other hand, observe that $\Gr_{2k-3}(12\cdots k) = \A_{k-2}\,\dot\cup\, \A_{k-1}$, where $\A_{k-2}$ is the subset of permutations in $\Gr_{2k-3}(12\cdots k)$ having their descent at position $k-2$, and $\A_{k-1}$ is the subset of elements having their descent at position $k-1$. Removing the largest entry from $\pi\in \Gr_{2k-2}(12\cdots k)$ gives a unique element of $\A_{k-2}$, and removing 1 from $\pi$ gives a unique element of $\A_{k-1}$. In other words, $\abs{\A_{k-2}} = \abs{\Gr_{2k-2}(12\cdots k)}=\abs{\A_{k-1}}$, and therefore we have $\abs{\Gr_{2k-3}(12\cdots k)} = 2C_{k-1}$.
\end{proof}

More generally, Michael Weiner suggested the following formula, which we were able to verify up to $k=12$, see Table~\ref{tab:finiteClasses}.

\begin{conjecture}[Weiner]
For $k\ge 2$ and $m\in\{k,\dots,2k-2\}$, 
\[ \abs{\Gr_{m}(12\cdots k)} = \sum_{j=1}^{k-\lfloor m/2\rfloor} (-1)^{j-1} j\cdot \binom{2k-m-j}{j}C_{k-j}. \]
\end{conjecture}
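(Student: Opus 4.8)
The plan is to realize $\Gr_m(12\cdots k)$ explicitly via Lehmer codes and then count the resulting lattice paths by inclusion-exclusion. Recall from the proof of the previous proposition that a permutation $\pi\in\Gr_m$ with $k\le m\le 2k-2$ that avoids $12\cdots k$ is encoded by a weakly increasing word $c_1\le\cdots\le c_k$ followed by zeros (we pad with the convention $c_k$ = value of the last nonzero code position or the relevant bound), subject to two constraints: an \emph{upper bound} constraint from $\pi$ being a permutation of $[m]$ (so $c_i\le m-i$), and a \emph{lower bound} constraint forcing enough inversions so that no increasing subsequence of length $k$ survives. Concretely, avoiding $12\cdots k$ in a permutation with at most one descent means the longest increasing run has length $\le k-1$ on each side of the descent, which translates into $c_i\ge$ (some linear function of $i$, $m$, $k$) for $i$ in the descending part. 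First I would pin down these two bounds precisely and check that, after a shift of variables $d_i=c_i-(\text{lower bound})_i$, the admissible codes biject with lattice paths in a parallelogram-shaped region --- essentially Dyck-type paths of semilength $k-j$ where $j$ measures the "width deficit" $2k-m$.

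Next I would set up the inclusion-exclusion. Without the upper-bound constraint, the count of weakly increasing codes meeting only the lower bound is a Catalan number $C_{k-j}$ for the appropriate $j$; the upper bound cuts this down. The standard cycle-lemma / reflection argument for counting weakly increasing sequences confined to a parallelogram (equivalently, lattice paths in a strip, or ballot-type sequences with both a floor and a ceiling) produces an alternating sum where the $j$-th term counts paths that cross the ceiling at least a certain number of times, weighted by the number of such crossings --- this is exactly where the factor $(-1)^{j-1}j$ and the binomial $\binom{2k-m-j}{j}$ should come from, the binomial counting the ways to distribute the $j$ forced boundary contacts among the available $2k-m-j$ "slots", and $C_{k-j}$ counting the free remainder. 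So the key steps are: (1) translate pattern avoidance into the two code inequalities; (2) change variables to get a clean parallelogram/strip region; (3) apply the transfer-matrix or reflection principle for bounded lattice paths to get the signed sum; (4) identify each summand with $j\binom{2k-m-j}{j}C_{k-j}$.

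An alternative, possibly cleaner route avoids reflection entirely: since we only need $m\in\{k,\dots,2k-2\}$, set $r=2k-m\in\{2,\dots,k\}$, and argue directly on the structure of $\pi=\tau_1\,(m)\,1\,\tau_2$-type decompositions (generalizing the $m=2k-2$ case), splitting $\Gr_m(12\cdots k)$ by descent position; each piece should reduce to counting increasing codes in a fixed-width strip, and a generating-function identity $\sum_{m} \abs{\Gr_m(12\cdots k)}x^{2k-m} = \sum_j (-1)^{j-1} j C_{k-j} x^j/(1-x)^{?}$ or a direct convolution with the Catalan generating function would yield Weiner's formula after extracting coefficients. One can also try to prove it by a recurrence: establish $\abs{\Gr_m(12\cdots k)}$ in terms of $\abs{\Gr_{m-1}(12\cdots(k-1))}$ and $\abs{\Gr_{m-2}(12\cdots(k-1))}$ by deleting the largest entry or the entry $1$ (as was done to pass from $\Gr_{2k-2}$ to $\Gr_{2k-3}$), then check that the right-hand side of the conjectured formula satisfies the same recurrence with the same initial conditions $\abs{\Gr_k(12\cdots k)}=2^k-k-1$ and $\abs{\Gr_{2k-2}(12\cdots k)}=C_{k-1}$; the Catalan recurrence $C_{k-j}$ and Pascal's rule for $\binom{2k-m-j}{j}$ should make this a finite (if fiddly) verification.

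The main obstacle I expect is step (1)--(3): getting the lower-bound inequalities on the Lehmer code exactly right for \emph{all} descent positions simultaneously (not just the two extreme ones handled in the proposition), and then correctly accounting for the ceiling constraint so that the inclusion-exclusion produces precisely the weight $(-1)^{j-1}j$ rather than, say, $(-1)^{j-1}$ or $(-1)^{j-1}\binom{\text{something}}{j}$. The appearance of the bare factor $j$ strongly suggests a "number of returns to a boundary" statistic, so the cleanest proof will likely come from choosing the right bijection to lattice paths where $j$ literally counts something; identifying that statistic is the crux. Once the region and the statistic are correctly identified, the enumeration is a standard (though not entirely routine) application of the reflection principle or of Lindström--Gessel--Viennot for paths in a bounded strip.
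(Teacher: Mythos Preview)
The paper does \emph{not} prove this statement: it is presented explicitly as a conjecture (attributed to Weiner) and the authors say only that they ``were able to verify [it] up to $k=12$'' via the data in Table~\ref{tab:finiteClasses}. So there is no proof in the paper to compare against.

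As for your proposal itself: it is not a proof but a menu of plausible strategies. The Lehmer-code route is natural and consistent with how the paper handles the extremal case $m=2k-2$, but you have not actually written down the lower-bound inequalities for a general descent position, nor verified that the reflection/cycle-lemma count of strip-confined paths produces exactly the weight $(-1)^{j-1}j\binom{2k-m-j}{j}C_{k-j}$ rather than some other alternating sum; you yourself flag this as the crux and leave it open. The recurrence approach (delete the $1$ or the largest entry and reduce $k\to k-1$) is promising, but the single deletion used in the proposition only relates $\Gr_{2k-2}(12\cdots k)$ to $\Gr_{2k-3}(12\cdots k)$, not to a smaller $k$, so you would need a different reduction and a careful check that the conjectured right-hand side satisfies whatever recurrence you obtain. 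In short, each of your three outlines is a reasonable starting point for a research attempt, but none is carried far enough to constitute a proof, and the paper offers no argument to fall back on.
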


\begin{center}
\begin{table}[ht]
\small
\begin{tabular}{r|l}
\R $k$ & \hspace{.2\textwidth} $\abs{\Gr_{k}(12\cdots k)},\; \dots\;, \abs{\Gr_{2k-2}(12\cdots k)}$ \\ \hline
\R 2  & 1 \\
3  & 4, 2 \\
4  & 11, 10, 5 \\
5  & 26, 32, 28,14 \\
6  & 57, 84, 98, 84, 42 \\
7  & 120, 198, 276, 312, 264, 132 \\
8  & 247, 438, 687, 924, 1023, 858, 429 \\
9  & 502, 932, 1584, 2398, 3146, 3432, 2860,1430 \\
10 & 1013, 1936, 3476, 5720, 8437, 10868, 11726, 9724, 4862 \\
11 & 2036, 3962, 7384, 12896, 20696, 29926, 38012, 40664, 33592, 16796 \\
12 & 4083, 8034, 15353, 27976, 47762, 75140, 106964, 134368, 142766, 117572, 58786
\end{tabular}
\bigskip
\caption{Number of permutations in $\Gr_{m}(12\cdots k)$ for $k\le m\le 2k-2$.}
\label{tab:finiteClasses}
\end{table}
\end{center}

\medskip
For the rest of this section, we will focus on the avoidance of patterns having one descent.
\begin{lem}\label{lem:size3pattern}
For $\sigma\in \{132, 213, 231, 312\}$ and $n\in\mathbb{N}$, we have
\[ \abs{\Gr_n(\sigma)} = 1 + \binom{n}{2}. \]
\end{lem}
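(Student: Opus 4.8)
The plan is to handle the four patterns by a mix of symmetry and direct structural counting. First I would observe that the reverse-complement operation $\pi\mapsto\pi^{rc}$ preserves $\Gr_n$ and sends the pattern $132$ to $312$ and the pattern $213$ to $231$ (and vice versa). Hence $\abs{\Gr_n(132)}=\abs{\Gr_n(312)}$ and $\abs{\Gr_n(213)}=\abs{\Gr_n(231)}$, so it suffices to treat one pattern from each pair, say $231$ and $312$ (or $132$ and $213$). That cuts the work in half immediately.

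Next I would count $\Gr_n(231)$ directly using the structure of Grassmannian permutations recalled in the introduction: every $\pi\in\Gr_n$ with a descent is determined by choosing the subset $A\subseteq[n]$ of values appearing before the descent, listed increasingly, then the complement listed increasingly. A $231$ pattern occurs exactly when some value in the first (increasing) block is larger than some value in the second block that in turn is larger than a still later value — but since the second block is increasing, the "$31$" part of a $231$ forces the descent itself to be the "$31$", so a $231$ pattern exists iff there is an element of the first block that exceeds $\pi(k)=\pi(k+1)$-position value... More cleanly: since both blocks are increasing, any occurrence of $231$ must use one entry from the first block (the "$2$") and the descent pair or two entries straddling it. Working this out shows $\pi$ avoids $231$ iff the first block is an initial segment of the form $\{1,\dots,j\}$ together with exactly one "extra" larger value, or is a full initial segment — i.e. $\pi$ has a very rigid shape. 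Counting these shapes by the position of the descent and the single gap gives a sum of binomial coefficients collapsing to $1+\binom{n}{2}$. An entirely parallel (and slightly easier) analysis handles $312$: a $312$ pattern needs the "$3$" in the first block and "$12$" after it, which the increasing second block almost always supplies, so $312$-avoidance again forces the first block to be $\{1,\dots,j\}$ possibly with one extra value placed so nothing smaller follows it.

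An alternative, perhaps cleaner, route is via the Lehmer code characterization: $\pi\in\Gr_n$ iff $L(\pi)=c_1\cdots c_k\,0^{n-k}$ with $0\le c_1\le\cdots\le c_k$. Classical-pattern avoidance of a size-3 pattern translates into a condition on the code, and for these weakly increasing codes supported on a prefix the conditions become elementary inequalities; counting the admissible codes is then a short binomial computation. I would likely present the reverse-complement reduction first and then whichever of the two computations (direct block-structure or Lehmer code) is shorter for the chosen representative patterns.

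The main obstacle I anticipate is purely bookkeeping: correctly identifying, for each pattern, exactly which configurations of the two increasing blocks create an occurrence, being careful about boundary cases (descent at position $1$, the identity permutation, the single "extra" value coinciding with $n$, etc.), and then making sure the resulting sum of binomials genuinely telescopes to $1+\binom{n}{2}$. There is no deep idea here — the risk is an off-by-one in the case analysis — so the write-up should isolate the "shape" of a $\sigma$-avoiding Grassmannian permutation as a clean sub-claim before counting.
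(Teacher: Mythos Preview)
Your overall strategy---reduce by reverse-complement and then pin down the shape of the two increasing blocks---is exactly what the paper does, but there is a slip in the symmetry step that propagates. Reverse-complement pairs $132\leftrightarrow 213$ and $231\leftrightarrow 312$, not $132\leftrightarrow 312$ as you wrote; with your (incorrect) pairing, your two chosen representatives $231$ and $312$ lie in the \emph{same} rc-class, so treating both is redundant while the class $\{132,213\}$ is left uncovered. The paper takes $132$ and $231$ as its representatives.

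Your structural claim for $231$ is right: a Grassmannian permutation avoids $231$ iff its left block is $\{1,\dots,m-1\}\cup\{a\}$ for some $a\ge m$, and counting these gives $1+\binom{n}{2}$. Your sketch for $312$ is garbled---the correct shape is that the left block equals $\{1,\dots,M\}\setminus\{b\}$ for some $b<M$ (an initial segment \emph{missing} one element, not one with an extra on top)---but with the correct rc-pairing you do not need $312$ at all. What you \emph{do} need is $132$, and there the condition is that the left block is a run of consecutive integers $\{i,i+1,\dots,j\}$; the paper states this explicitly as $\pi=i(i+1)\cdots j\,1\,\tau$. The paper organizes the final count slightly differently from your plan, first showing $\lvert\Gr_n(132)\cap\Gr_n(231)\rvert=n$ (the identity together with the permutations whose descent is at position~$1$) and then showing each set difference $\Gr_n(\sigma)\setminus\Gr_n(\sigma')$ has $\binom{n-1}{2}$ elements, but once the block shapes are identified this and your direct count are equally short.
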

\begin{proof}
Since $\abs{\Gr_n(\sigma)}=\abs{\Gr_n(\sigma^{rc})}$, it is enough to only consider the patterns $132$ and $231$. Clearly, $\pi=1\cdots n\in\Gr_n(\sigma)$ if $\des(\sigma)>0$. Moreover, the permutation $\pi=n\,1\cdots(n-1)$ as well as any permutation of the form 
\[ \pi = i\,1\cdots(i-1)(i+1)\cdots n \;\text{ for } i\in\{2,\dots,n-1\}, \]
all avoid the patterns 132 and 231. These are the $n$ permutations in $\Gr_n(132)\cap \Gr_n(231)$. Note that if $\pi$ has a descent at position $k>1$, then $\pi$ must contain either a 132 pattern (if $\pi(k-1)<\pi(k+1)$) or a 231 pattern (if $\pi(k-1)>\pi(k+1)$). Thus $\pi\not\in \Gr_n(132)\cap \Gr_n(231)$.

If $\pi\in \Gr_n(132)\setminus \Gr_n(231)$, then it must be of the form 
\[ \pi=i(i+1)\cdots j\,1\,\tau \;\text{ with } i,j\in\{2,\dots n\}, \; i<j, \] 
where $\tau$ is the word consisting of the remaining $n-(j-i+2)$ elements of $[n]$ written in increasing order. There are $\binom{n-1}{2}$ such permutations, so
\[ \abs{\Gr_n(132)} = n+\binom{n-1}{2} = 1 + \binom{n}{2}. \]
Similarly, if $\pi\in \Gr_n(231)\setminus \Gr_n(132)$, then it must be of the form 
\[ \pi=1\cdots (i-1)\, j\, i\,\tau \;\text{ with } i,j\in\{2,\dots n\}, \; i<j, \]
leading to the same number of permutations. 
\end{proof}

Note that $\Gr_n(132) = S_n(132,321)$, so we could have proved the above lemma by means of \cite[Prop.~11]{SiSch85} together with an explicit bijection from $\Gr_n(132)$ to $\Gr_n(231)$.

More generally, for patterns of size $k\ge 3$ having only one descent, there is only one Wilf equivalence class of pattern-avoiding Grassmannian permutations: 
\begin{thm}\label{thm:mainResult}
If $k\ge 3$ and $\sigma\in S_k$ with $\des(\sigma)=1$, then
\[ \abs{\Gr_n(\sigma)}  = 1 + \sum_{j=3}^k\binom{n}{j-1} \text{ for } n\in\mathbb{N}. \]
\end{thm}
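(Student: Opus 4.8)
The plan is to show that the count $1 + \sum_{j=3}^{k}\binom{n}{j-1}$ does not depend on which one-descent pattern $\sigma\in S_k$ we choose, and then to verify it for one convenient representative. A natural strategy is to classify a Grassmannian permutation $\pi$ by the position of its descent and by the size of its ``essential'' indecomposable middle block, exactly as in the proofs of Proposition~\ref{prop:biGrass} and the Grassmannian-involutions proposition. Writing $\pi = \id_{a}\oplus \rho \oplus \id_{b}$ where $\rho\in\Gr_m$ is a permutation that neither starts with $1$ nor ends with its largest letter, one sees that $\pi$ avoids $\sigma$ if and only if $\rho$ avoids $\sigma$ (since $\sigma$ itself is indecomposable in the relevant sense, any occurrence of $\sigma$ in $\pi$ must land inside $\rho$, because $\sigma$ has a descent and the prefix/suffix identities contribute only increasing runs). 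The number of ways to pad a given $\rho$ of size $m$ out to size $n$ by choosing $a,b\ge 0$ with $a+b=n-m$ is $n-m+1$, so
\[
 \abs{\Gr_n(\sigma)} = \sum_{m\ge 0}(n-m+1)\, g_m(\sigma),
\]
where $g_m(\sigma)$ counts the size-$m$ ``reduced'' $\sigma$-avoiders (those with no leading $1$ and no trailing maximum). The crux is therefore to understand $g_m(\sigma)$.

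Next I would pin down the reduced avoiders concretely. A reduced Grassmannian permutation $\rho\in\Gr_m$ has a single descent at some position $p$ with $1\le p\le m-1$; its two ascending runs are determined by a choice of which values go into the first run, subject to $\rho(1)\neq 1$ and $\rho(m)\neq m$, i.e.\ $1$ is in the second run and $m$ is in the first run. Equivalently $\rho$ is encoded by the set $S$ of values in the first run with $m\in S$, $1\notin S$, $1\le |S|\le m-1$. For such $\rho$, containing the one-descent pattern $\sigma$ of size $k$ is equivalent to a purely combinatorial condition on $S$: one must be able to pick $k$ positions realizing $\sigma$'s descent at the unique descent of $\rho$, which (because both $\sigma$ and $\rho$ have exactly one descent and are $321$-avoiding) translates into inequalities on how many elements of $S$ lie below/above certain thresholds. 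The key observation, which makes all one-descent $\sigma$ of a given size $k$ equivalent, is that this condition depends on $\sigma$ only through the pair $(\des\text{-left length}, \des\text{-right length}) = (s, k-s)$ of $\sigma$ — and a short symmetry/monotonicity argument shows the resulting count of ``bad'' $S$'s is the same for every $s\in\{1,\dots,k-1\}$. I would make this precise by exhibiting, for each $s$, an explicit bijection between the reduced $\sigma$-avoiders and a fixed model set (e.g.\ the subsets $S\subseteq[m]$ avoiding a configuration of $\le k-2$ ``free'' elements strictly between two forced ones), so that $g_m(\sigma)$ depends only on $k$.

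Having reduced to a single $\sigma$, say $\sigma = 23\cdots k\,1$ or $\sigma = k\,1\,2\cdots(k-1)$ — whichever gives the cleanest set condition — I would directly enumerate the reduced avoiders of size $m$. For these patterns, avoidance forces the first ascending run (resp.\ second ascending run) to have length at most $k-2$ apart from the forced endpoints, so the reduced avoiders of size $m$ are counted by a small binomial expression in $m$, and summing $(n-m+1)g_m$ over $m$ collapses, via the hockey-stick identity $\sum_{m}\binom{m}{j}\cdot(\text{linear in }n)$-type manipulations, to $1 + \sum_{j=3}^{k}\binom{n}{j-1}$; the lone $1$ is the contribution of the identity ($m=0$). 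I expect the main obstacle to be the second step: cleanly proving that the avoidance condition on the value-set $S$ is insensitive to the shape $(s,k-s)$ of the one-descent pattern, rather than just to $k$. The safest route is probably to reformulate $\sigma$-containment in $\rho$ in terms of the Lehmer code (using the code characterization $c_1\le\cdots\le c_p$, $c_i=0$ for $i>p$ from the introduction), where a one-descent pattern occurrence becomes an explicit inequality $c_i \ge$ (something) for a single index, and then the independence from $s$ is manifest. Once that is in hand, the remaining counting is routine binomial bookkeeping plus Vandermonde/hockey-stick, and Lemma~\ref{lem:size3pattern} serves as the $k=3$ base case and a sanity check.
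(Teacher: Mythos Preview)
Your first reduction has a genuine error. You assert that $\pi=\id_a\oplus\rho\oplus\id_b$ avoids $\sigma$ if and only if $\rho$ does, on the grounds that ``$\sigma$ itself is indecomposable in the relevant sense.'' But a one-descent pattern need not be $\oplus$-indecomposable: any $\sigma$ that begins with $1$ or ends with $k$ is decomposable (e.g.\ $\sigma=1342=1\oplus 231$), and for these the equivalence fails outright. Take $\sigma=1342$ and the reduced permutation $\rho=231$: then $\rho$ avoids $\sigma$ (it is too short), yet $\pi=\id_1\oplus\rho=1342$ contains $\sigma$. So the identity tails genuinely participate in $\sigma$-occurrences, and the formula $\abs{\Gr_n(\sigma)}=\sum_m(n{-}m{+}1)\,g_m(\sigma)$ is simply false for such $\sigma$. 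Roughly half of all one-descent patterns of a given size are $\oplus$-decomposable, so this is not a local repair.

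Your Wilf-equivalence step is also only a sketch. You correctly flag it as the main obstacle, but the intermediate claim --- that containment of $\sigma$ in a reduced $\rho$ depends only on the descent position $s$ of $\sigma$ --- is already too coarse: for fixed $k$ and $s$ there are several distinct $\sigma$ (for $k=4$, $s=2$: $1324$, $1423$, $2314$, $2413$, $3412$), and they impose genuinely different conditions on which $\rho$ contain them, even though the counts ultimately agree. The suggested Lehmer-code reformulation does not make the independence from $\sigma$ ``manifest'' without real additional work.

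The paper sidesteps both issues by induction on $k$. Given $\sigma\in S_k$ with one descent, one removes either the entry $1$ or the entry $k$ to obtain $\sigma'\in S_{k-1}$ with one descent; since $\Gr_n(\sigma')\subset\Gr_n(\sigma)$, it suffices to count $\Gr_n(\sigma)\setminus\Gr_n(\sigma')$. A grid/mesh-shading argument then shows that each such permutation is uniquely determined by the $k{-}1$ values forming a left-most (or right-most) occurrence of $\sigma'$, giving $\abs{\Gr_n(\sigma)\setminus\Gr_n(\sigma')}=\binom{n}{k-1}$ directly, and the formula follows from the $k=3$ base case. No decomposition of $\pi$ and no indecomposability hypothesis on $\sigma$ is needed.
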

\begin{proof}
For $k = 3$, the statement was proven in Lemma~\ref{lem:size3pattern}. We proceed by induction in $k$. 

We start by discussing how the statement for $k=4$ follows from the case when $k=3$. Note that, for any $\sigma \in S_{4}$ with $\des(\sigma)=1$, there is a $\sigma' \in S_{3}$ with $\des(\sigma)=1$ obtained from $\sigma$ by removing either the 1 or the 4 as shown in the following table. 

\smallskip
\begin{center}
\small
\begin{tabular}{c||l}
\R  $\sigma'$ & \hspace{5ex} Pattern $\sigma$ \\ \hline
\R 132 & \textbf{1}243, 132\textbf{4}, 13\textbf{4}2, 24\textbf{1}3 \\ \hline
\R 213 & \textbf{1}324, 213\textbf{4}, 2\textbf{4}13, 3\textbf{1}24 \\ \hline
\R 231 & \textbf{1}342, 231\textbf{4}, 23\textbf{4}1, 34\textbf{1}2 \\ \hline
\R 312 & \textbf{1}423, 312\textbf{4}, 3\textbf{4}12, 4\textbf{1}23
\end{tabular}
\end{center}
In such instances where there are two options for $\sigma'$, either choice leads to the same result.

For example, suppose $\sigma = 2413$ and choose $\sigma'=213$. Since $\Gr_n(213)\subset \Gr_n(2413)$, the set $\Gr_n(2413)$ can be decomposed into disjoint sets
\[  \Gr_n(2413) = \Gr_n(213) \: \dot\cup \: \big( \Gr_n(2413)\setminus \Gr_n(213)\big). \]
We already know how to count $\Gr_n(213)$, so it remains to enumerate $\Gr_n(2413)\setminus \Gr_n(213)$. This is the set of permutations in $\Gr_n(2413)$ that contain a $213$ pattern, so their graph must be of the form

\begin{center}
\begin{tikzpicture}[scale=0.7]
\node[left=1] at (0, 1) {$i$};
\node[left=1] at (0, 2) {$j$};
\node[left=1] at (0, 3) {$k$};
\clip (0.2,0.2) rectangle (3.9,3.9);
\draw[gray] (0,0) grid (4,4);
\draw[mesh] (1,1) rectangle (2,2); 
\draw[mesh] (0,2) rectangle (1,4); 
\draw[mesh] (3,0) rectangle (4,3);
\draw[mesh] (2,0) rectangle (3,1);
\draw[mesh] (2,3) rectangle (3,4);
\draw[meshPatt] (1,3) rectangle (2,4);
\foreach \x/\y in {0/1,1/0,2/2}{\draw[meshMost] (\x,\y) rectangle (1+\x,1+\y);}
\draw[thick] (0.2,0.2) -- (0.88,0.88);
\draw[thick] (1,2) -- (1.88,2.88);
\draw[thick] (2,1) -- (2.88,1.88);
\draw[thick] (3,3) -- (3.8,3.8);
\foreach \x/\y in {1/2,2/1,3/3}{\draw[fill] (\x,\y) circle (0.12);}
\end{tikzpicture}
\end{center}
where the dots and regions shaded with \tikz[scale=0.7,baseline=2pt]{\draw[meshMost] (0,0) rectangle (0.5,0.5);} correspond to the choice of a left-most $213$ pattern, and the other shaded regions represent the restrictions of being Grassmannian (\tikz[scale=0.7,baseline=2pt]{\draw[mesh] (0,0) rectangle (0.5,0.5);}) and $2413$-avoiding (\tikz[scale=0.7,baseline=2pt]{\draw[meshPatt] (0,0) rectangle (0.5,0.5);}). More precisely, every $\pi\in \Gr_n(2413)\setminus \Gr_n(213)$ is of the form 
\[ \pi = \tau_0\, j\, \tau_1\, i\, \tau_2\, k\, \tau_3 \;\text{ with } 1\le i<j<k \le n, \]
where the permutations $\tau_0$, $\tau_1$, $\tau_2$, $\tau_3$ are either empty or consist of increasing consecutive numbers. In other words, $\pi$ is uniquely determined by a choice of $i, j, k\in[n]$. Therefore,
\[ \abs{\Gr_n(2413)\setminus \Gr_n(213)}=\binom{n}{3}\; \text{ and }\; \abs{\Gr_n(2413)}=1+\binom{n}{2}+\binom{n}{3}. \]
This coincides with the result from Proposition~\ref{prop:biGrass}.

For any other pattern $\sigma$ of size $4$ with $\des(\sigma)=1$, the argument is similar. The only difference occurs depending on whether the choice of $\sigma'$ is the result of removing the $1$ or the $4$ from $\sigma$. If the $1$ is removed, we take the right-most $\sigma'$-pattern for the plot, and if the $4$ is removed, then we take the left-most $\sigma'$-pattern instead. In any case, the elements of $\Gr_n(\sigma)\setminus \Gr_n(\sigma')$ are determined by the choice of a left-most/right-most $\sigma'$-pattern. Table~\ref{tab:patternSamples} shows examples of generic plots for other corresponding $(\sigma,\sigma')$ pairs. In conclusion, for $\sigma\in S_{4}$ with $\des(\sigma)=1$, we have $\abs{\Gr_n(\sigma)}  = 1 + \binom{n}{2} + \binom{n}{3}$.

\begin{table}[ht]
\small
\begin{tabular}{c|c|c|c}
\R $\Gr_n(2134)\backslash \Gr_n(213)$ & $\Gr_n(2341)\backslash \Gr_n(231)$ & $\Gr_n(2413)\backslash \Gr_n(132)$ & $\Gr_n(3412)\backslash \Gr_n(312)$ \\[2pt] \hline
\rule{0pt}{70pt}\noindent
\begin{tikzpicture}[scale=0.6]
\clip (0.1,0.1) rectangle (3.9,3.9);
\draw[gray] (0,0) grid (4,4);
\draw[mesh] (0,2) rectangle (1,4); 
\draw[mesh] (1,1) rectangle (2,2); 
\draw[mesh] (2,3) rectangle (3,4);
\draw[mesh] (2,0) rectangle (3,1);
\draw[mesh] (3,0) rectangle (4,3);
\draw[meshPatt] (3,3) rectangle (4,4);
\foreach \x/\y in {0/1,1/0,2/2}{\draw[meshMost] (\x,\y) rectangle (1+\x,1+\y);}
\draw[thick] (0.2,0.2) -- (0.88,0.88);
\draw[thick] (2,1) -- (2.88,1.88);
\draw[thick] (1,2) -- (1.88,3.88);
\foreach \x/\y in {1/2,2/1,3/3}{\draw[fill] (\x,\y) circle (0.12);}
\draw[fill=white] (1.47,3) circle (0.12);
\end{tikzpicture}
&
\begin{tikzpicture}[scale=0.6]
\clip (0.1,0.1) rectangle (3.9,3.9);
\draw[gray] (0,0) grid (4,4);
\draw[mesh] (0,2) rectangle (1,4); 
\draw[mesh] (1,0) rectangle (2,2); 
\draw[mesh] (1,3) rectangle (2,4);
\draw[mesh] (2,1) rectangle (3,3);
\draw[mesh] (3,0) rectangle (4,1);
\draw[meshPatt] (2,3) rectangle (3,4);
\foreach \x/\y in {0/1,1/2,2/0}{\draw[meshMost] (\x,\y) rectangle (1+\x,1+\y);}
\draw[thick] (0.2,0.2) -- (0.88,0.88);
\draw[thick] (3,1) -- (3.8,3.8);
\foreach \x/\y in {1/2,2/3,3/1}{\draw[fill] (\x,\y) circle (0.12);}
\draw[fill=white] (3.3,2) circle (0.12);
\draw[fill=white] (3.57,3) circle (0.12);
\end{tikzpicture}
&
\begin{tikzpicture}[scale=0.6]
\clip (0.1,0.1) rectangle (3.9,3.9);
\draw[gray] (0,0) grid (4,4);
\draw[mesh] (1,0) rectangle (2,1); 
\draw[mesh] (0,1) rectangle (1,4); 
\draw[mesh] (1,3) rectangle (2,4);
\draw[mesh] (2,2) rectangle (3,3);
\draw[mesh] (3,0) rectangle (4,3);
\draw[meshPatt] (2,0) rectangle (3,1);
\foreach \x/\y in {1/1,2/3,3/2}{\draw[meshMost] (\x,\y) rectangle (1+\x,1+\y);}
\draw[thick] (0.2,0.2) -- (1,1);
\draw[thick] (1.12,2.12) -- (2,3);
\draw[thick] (2.12,1.12) -- (3,2);
\draw[thick] (3.12,3.12) -- (3.8,3.8);
\foreach \x/\y in {1/1,2/3,3/2}{\draw[fill] (\x,\y) circle (0.12);}
\end{tikzpicture}
&
\begin{tikzpicture}[scale=0.6]
\clip (0.1,0.1) rectangle (3.9,3.9);
\draw[gray] (0,0) grid (4,4);
\draw[mesh] (0,3) rectangle (1,4); 
\draw[mesh] (1,1) rectangle (2,3); 
\draw[mesh] (2,0) rectangle (3,1);
\draw[mesh] (2,2) rectangle (3,4);
\draw[mesh] (3,0) rectangle (4,2);
\draw[meshPatt] (1,3) rectangle (2,4);
\foreach \x/\y in {0/2,1/0,2/1}{\draw[meshMost] (\x,\y) rectangle (1+\x,1+\y);}
\draw[thick] (0.2,0.2) -- (0.9,1.88);
\draw[thick] (3,2) -- (3.8,3.8);
\foreach \x/\y in {1/3,2/1,3/2}{\draw[fill] (\x,\y) circle (0.12);}
\draw[fill=white] (0.53,1) circle (0.12);
\draw[fill=white] (3.45,3) circle (0.12);
\end{tikzpicture}
\end{tabular}
\bigskip
\caption{Examples for $\Gr_n(\sigma)\setminus \Gr_n(\sigma')$.}
\label{tab:patternSamples}
\end{table}

Suppose now that the statement of the theorem is true for patterns of size $k-1$ with only one descent, and let $\sigma\in S_k$ with $\des(\sigma)=1$. Then $\sigma$ must be of the form $\sigma=1\oplus\sigma'$ or $\sigma=\sigma'\oplus 1$ with $\des(\sigma')=1$, or $\sigma=\sigma_L k1 \sigma_R$ with possibly empty or all increasing permutations $\sigma_L$ and $\sigma_R$. In all cases, by removing either the $1$ (if $\sigma=1\oplus\sigma'$ or if $\sigma_L=\varepsilon$) or the $k$ (if $\sigma=\sigma'\oplus 1$ or $\sigma_R=\varepsilon$), we arrive at a permutation $\sigma'$ with $\des(\sigma')=1$.

Choose such a $\sigma'$ and split $\Gr_n(\sigma) = \Gr_n(\sigma') \: \dot\cup \: \big(\Gr_n(\sigma)\setminus \Gr_n(\sigma')\big)$. By the induction hypothesis, $\abs{\Gr_n(\sigma')} = 1 + \sum_{j=3}^{k-1}\binom{n}{j-1}$. Now, in order to count the elements of $\Gr_n(\sigma)\setminus \Gr_n(\sigma')$, we consider a $k \times k$ grid and plot the $k-1$ elements of the contained pattern $\sigma'$. This divides the grid into $k^2$ regions with some of the regions shaded based on the restrictions on the permutations -- being Grassmannian (having only one descent), avoidance of the pattern $\sigma$, and the choice of $\sigma'$ being the left-most or right-most pattern. 

Through this process, we will end up with exactly $k$ unique unshaded regions, each one in a different row of the grid, determining the elements of a permutation in $\Gr_n(\sigma)\setminus \Gr_n(\sigma')$. 

First, from the $k^2$ available regions, $k(k-2)$ of them will be shaded because of the Grassmannian condition, leaving two unshaded regions in each row. Moreover, if $\sigma'$ is obtained from $\sigma$ by removing $k$ (like in our example for $k=4$), then we declare $\sigma'$ to be the left-most instance of the pattern and shade the regions having the elements of $\sigma'$ on their top right corner. We also shade the region on the top row of the grid that could produce a $\sigma$-pattern. Similarly, if $\sigma'$ is obtained from $\sigma$ by removing the $1$, then we pick the right-most instance of $\sigma'$ and shade the regions having the elements of $\sigma'$ on their bottom left corner. In addition, we shade the region on the bottom row that could produce a $\sigma$-pattern.

In both cases, the choice of $\sigma'$ induces the shading of $k-1$ regions of the grid, and the $\sigma$-avoiding condition adds one more forbidden region to the graph. In total, there will be $k^2 - k(k-2) - (k-1) - 1 = k$ distinguishable unshaded regions available to create a permutation in $\Gr_n(\sigma)\setminus \Gr_n(\sigma')$ with increasing runs of consecutive numbers.

In conclusion, every permutation in $\Gr_n(\sigma)\setminus \Gr_n(\sigma')$ is uniquely determined by the choice of the $k-1$ elements of the left-most/right-most instance of $\sigma'$, so $\abs{\Gr_n(\sigma)\setminus \Gr_n(\sigma')} = \binom{n}{k-1}$ and therefore $\abs{\Gr_n(\sigma)} = 1 + \sum_{j=3}^{k}\binom{n}{j-1}$.
\end{proof}

Some of the sequences generated by the enumeration of $\Gr_n(\sigma)$ are listed in Table~\ref{tab:avoidingGrass}.
\begin{table}[ht]
\small
\begin{tabular}{|c|l|c|} \hline
\R $\;\abs{\sigma}\;$ & \hspace{35pt} Sequence $\abs{\Gr_n(\sigma)}$ & OEIS \\[2pt] \hline
\R  3& $1, 2, 4, 7, 11, 16, 22, 29, 37, 46,\dots$ &  A000124 \\ \hline
\R  4& $1, 2, 5, 11, 21, 36, 57, 85, 121, 166,\dots$ & A050407 \\ \hline
\R  5& $1, 2, 5, 12, 26, 51, 92, 155, 247, 376,\dots$ & A027927 \\ \hline
\R  6& $1, 2, 5, 12, 27, 57, 113, 211, 373, 628,\dots$ & n/a\\ \hline
\R  7& $1, 2, 5, 12, 27, 58, 120, 239, 457, 838,\dots$ & n/a\\ \hline
\R  8& $1, 2, 5, 12, 27, 58, 121, 247, 493, 958,\dots$ & n/a\\ \hline
\R  9& $1, 2, 5, 12, 27, 58, 121, 248, 502, 1003,\dots$ & n/a\\ \hline
\R  10& $1, 2, 5, 12, 27, 58, 121, 248, 503, 1013,\dots$ & n/a\\ \hline
\end{tabular}
\bigskip
\caption{Enumeration of $\Gr_n(\sigma)$ for a pattern $\sigma$ with $\des(\sigma)=1$.}
\label{tab:avoidingGrass}
\end{table}

\section{Connection to Dyck and Schr\"oder paths}
\label{sec:bijections}

The OEIS gives several combinatorial interpretations for some of the sequences mentioned in this paper. In this section, we give explicit bijections connecting (pattern-avoiding) Grassmannian permutations to Dyck paths and certain pattern-avoiding Schr\"oder paths.

Let $\Dyck(n)$ denote the set of Dyck paths of semilength $n$. We consider a bijective map 
\begin{equation*}
 \varphi: \Dyck(n) \to S_n(321)
\end{equation*}
that is particularly amenable to Grassmannian permutations. It is defined as follows:
\begin{itemize}
\item[-] From left to right, number the down-steps of the Dyck path with $[n]$ in increasing order. 
\item[-] At each peak $\U\D$, label the up-step with the number already assigned to its paired down-step.
\item[-] Going through the ascents from left to right, label the remaining up-steps from bottom to top on each ascent in a greedy fashion.
\item[-] The resulting labeling gives a $321$-avoiding permutation on $[n]$.
\end{itemize}
For example, the path in Figure~\ref{fig:peak-greedy} gives the permutation $23174586\in S_8(321)$.

\medskip
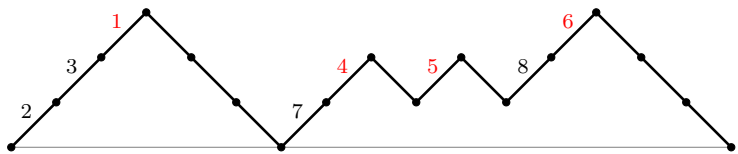
\begin{figure}[ht] 
\begin{tikzpicture}[scale=0.6]
\scriptsize
	\dyckpath{0,0}{8}{1,1,1,0,0,0,1,1,0,1,0,1,1,0,0,0};
	\draw[red] (2.7,2.8) node[left=1pt] {1};
	\draw[red] (7.7,1.8) node[left=1pt] {4};
	\draw[red] (9.7,1.8) node[left=1pt] {5};
	\draw[red] (12.7,2.8) node[left=1pt] {6};
	\draw (1.7,1.8) node[left=1pt] {3};
	\draw (0.7,0.8) node[left=1pt] {2};
	\draw (6.7,0.8) node[left=1pt] {7};
	\draw (11.7,1.8) node[left=1pt] {8};
\end{tikzpicture}
\caption{$P\in\Dyck(8)$ with $\varphi(P)=23174586$.}
\label{fig:peak-greedy}
\end{figure}

Observe that labels at the peaks of a Dyck path $P$ correspond to the right-to-left (RL) minima of the permutation $\varphi(P)$. We have $\varphi\big((\U\D)^n\big) = 12\cdots n$, and the number of descents of $\varphi(P)$ is equal to the number of long ascents (2 or more up-steps) in $P$.

The inverse of $\varphi$ is defined as follows. Given a permutation $\pi\in S_n(321)$ with RL minima $\pi(i_1)<\pi(i_2)<\cdots <\pi(i_\ell)$, write it as $\pi = \tau_{i_1}\cdots \tau_{i_\ell}$, where each $\tau_{i_k}$ ends with the RL minimum $\pi(i_k)$. If $a_k = \abs{\tau_{i_k}}$ and $b_k = \pi(i_{k+1})-\pi(i_k)$ for $k\in\{1,\dots,\ell-1\}$, then we let
\[ \varphi^{-1}(\pi) = \U^{a_1} \D^{b_1}\U^{a_2} \D^{b_2}\cdots \U^{a_{\ell-1}} \D^{b_{\ell-1}} \U^{\abs{\tau_{i_\ell}}} \D^{n+1-\pi(i_\ell)}. \]
For example, if $\pi = 23{\bf 1}7{\bf 4}{\bf 5}8{\bf 6}$, then $\tau_1=231$, $\tau_2=74$, $\tau_3=5$, and $\tau_4=86$, so the above construction gives $\varphi^{-1}(\pi)=\U^3\D^3\U^2\D^1\U^1\D^1\U^2\D^3$. This is the Dyck path in Figure~\ref{fig:peak-greedy}.

As an immediate consequence of the definition of $\varphi$, we get the following correspondence.
\begin{prop}
The set $\Gr_n$ of Grassmannian permutations on $[n]$ is in bijection with the set of Dyck paths of semilength $n$ having at most one long ascent.  
\end{prop}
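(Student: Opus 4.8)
The plan is to deduce this directly from the properties of the map $\varphi$ that were just established, since the proposition is essentially a corollary of the construction. First I would observe that $\Gr_n\subseteq S_n(321)$: a Grassmannian permutation has at most one descent, while the pattern $321$ has two descents, so every permutation with at most one descent avoids $321$. Hence $\varphi$, being a bijection $\Dyck(n)\to S_n(321)$, can be restricted and we only need to identify which Dyck paths map into the subset $\Gr_n$.

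Next I would invoke the already-stated fact that for any $P\in\Dyck(n)$, the number of descents of $\varphi(P)$ equals the number of long ascents (maximal runs of two or more consecutive up-steps) of $P$. Consequently $\varphi(P)\in\Gr_n$ if and only if $\des(\varphi(P))\le 1$ if and only if $P$ has at most one long ascent. Therefore $\varphi$ restricts to a bijection between the set of Dyck paths of semilength $n$ with at most one long ascent and $\Gr_n$, which is exactly the claim. For completeness I might also point to $\varphi^{-1}$: given $\pi\in\Gr_n$, the explicit formula for $\varphi^{-1}(\pi)$ in terms of the right-to-left minima shows that the exponents $a_k$ of the up-runs are all equal to $1$ except possibly at the block containing the descent, confirming directly that the preimage has at most one long ascent.

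There is essentially no obstacle here; the only point that would require genuine argument is the descent–long-ascent correspondence for $\varphi$, and that has already been recorded as an immediate consequence of the definition of $\varphi$ (peaks become right-to-left minima, and a descent of $\varphi(P)$ is created precisely when a non-peak up-step sits above a labeled one in an ascent of length at least two). So I would keep the proof to a few lines: establish $\Gr_n\subseteq S_n(321)$, restrict the bijection $\varphi$, and use the statistic-preservation property to pin down the image.
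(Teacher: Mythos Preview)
Your proposal is correct and matches the paper's approach exactly: the paper states the proposition as ``an immediate consequence of the definition of $\varphi$'' with no further proof, relying on precisely the two facts you identify (that $\Gr_n\subseteq S_n(321)$ and that descents of $\varphi(P)$ correspond to long ascents of $P$). Your write-up simply makes explicit the one-paragraph argument the paper leaves implicit.
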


We will call the elements of $\varphi^{-1}(\Gr_n)$ {\em Grassmannian Dyck paths}. The path $(\U\D)^n$ is the only path of height 1, and we will refer to it as the {\em identity path}. The height of any other Grassmannian Dyck path is the length of its long ascent. All peaks before the long ascent happen at height 1, and the heights of peaks after the long ascent form a weakly decreasing sequence. More precisely, every Grassmannian Dyck path different from the identity must be of the form
\begin{equation}\label{eq:GrassDyck}
 (\U\D)^{\ell_1} \U P_{k}\D (\U\D)^{\ell_2} \;\text{ with } \ell_1, \ell_2\ge 0,\; \ell_1+\ell_2<n-1, 
\end{equation}
where $P_{k}$ is a Dyck path of semilength $n-1-\ell_1-\ell_2$ having $k$ peaks and avoiding $\U\U$ except possibly on its first ascent. In particular, $\U P_k\D$ is an indecomposable Grassmannian Dyck path with $k$ peaks. Moreover,
\begin{equation}\label{eq:GrassReduction}
 \varphi\big((\U\D)^{\ell_1} \U P_{k}\D (\U\D)^{\ell_2}\big) = \id_{\ell_1}\oplus\; \pi_k \oplus \id_{\ell_2}
\end{equation}
with $\pi_k\in\Gr_{n-\ell_1-\ell_2}$ having $k$ RL minima. 

\begin{prop}
For $k\ge 3$, the elements of $\Gr_n(k12\cdots (k-1))$ are in one-to-one correspondence with the Grassmannian Dyck paths of semilength $n$ having at most $k-2$ peaks at height greater than 1.
\end{prop}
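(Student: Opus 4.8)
The plan is to transport the avoidance condition across the bijection $\varphi$: first I would characterize, in permutation language, when a Grassmannian permutation contains $k12\cdots(k-1)$, and then I would recognize the corresponding feature of its Dyck path.

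Note that $k12\cdots(k-1)$ has exactly one descent, at its first position, and that its first letter is its largest; so an occurrence of it in $\pi$ consists of an entry $\pi(p_1)$ together with $k-1$ strictly smaller entries to its right forming an increasing subsequence. For a non-identity $\pi\in\Gr_n$ I would use the decomposition obtained by applying $\varphi$ to \eqref{eq:GrassDyck} and \eqref{eq:GrassReduction}, namely $\pi=\id_{\ell_1}\oplus\pi_0\oplus\id_{\ell_2}$ with $\pi_0=\varphi(\U Q\D)$ the indecomposable non-identity Grassmannian core; here $\U Q\D$ is the middle subpath (written $\U P_k\D$ in \eqref{eq:GrassDyck}), $Q$ is a Dyck path, and I write $r$ for its number of peaks, equivalently the number of right-to-left minima of $\pi_0$. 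The first point is that any occurrence of $k12\cdots(k-1)$ in $\pi$ must lie entirely inside $\pi_0$: the ``$k$''-letter cannot sit in an $\id$-block (it would be too small, or would have nothing smaller to its right), and the $k-1$ small increasing letters cannot sit in an $\id$-block either (the upper block is too large, the lower block lies to the left of $p_1$). Hence $\pi$ contains $k12\cdots(k-1)$ if and only if $\pi_0$ does.

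Next I would analyze the core $\pi_0$, of size $n'$ say, whose unique descent is at some position $j$. Since $\pi_0$ is indecomposable, $\pi_0(n')\neq n'$, so the maximal value $n'$ is the top of the first increasing run, i.e.\ $\pi_0(j)=n'$; consequently the trailing run $\pi_0(j+1)<\cdots<\pi_0(n')$ has exactly $r=n'-j$ entries, these are precisely the RL minima of $\pi_0$, and they all lie below $\pi_0(j)$. In a permutation with at most one descent, any entries that lie to the right of a fixed position and below its value automatically form an increasing subsequence; therefore $\pi_0$ contains $k12\cdots(k-1)$ exactly when some entry has at least $k-1$ smaller entries to its right. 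By the Lehmer-code description of Grassmannian permutations this count is weakly increasing up to the descent and zero afterwards, so it is maximized at position $j$, where it equals $r$. Thus $\pi_0$ --- and hence $\pi$ --- contains the pattern if and only if $r\ge k-1$; that is, $\pi$ avoids $k12\cdots(k-1)$ if and only if $r\le k-2$. (The identity is immediate: it avoids every $k12\cdots(k-1)$ and its empty core has $r=0$.)

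Finally I would read $r$ off the path. By \eqref{eq:GrassDyck} and \eqref{eq:GrassReduction}, $\varphi^{-1}(\pi)=(\U\D)^{\ell_1}\,\U Q\D\,(\U\D)^{\ell_2}$ with $Q$ a Dyck path having $r$ peaks, and the $\ell_1+\ell_2$ peaks contributed by the $(\U\D)$ factors all sit at height $1$. Since $Q$ begins with an up-step and ends with a down-step, forming $\U Q\D$ neither creates nor destroys a peak but raises the height of every peak by exactly $1$; as every peak of a Dyck path has height at least $1$, the $r$ peaks of $\U Q\D$ all have height at least $2$. Hence $\varphi^{-1}(\pi)$ has exactly $r$ peaks of height greater than $1$, and combined with the previous step this gives: $\pi\in\Gr_n\big(k12\cdots(k-1)\big)$ if and only if $\varphi^{-1}(\pi)$ has at most $k-2$ peaks of height greater than $1$. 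Since $\varphi$ is a bijection, this is the asserted correspondence. I expect the one genuinely fiddly point to be the structural bookkeeping in the third paragraph --- that $\pi_0(j)=n'$ and that the trailing run is exactly the set of RL minima --- but this is forced by indecomposability together with the one-descent shape, and I would sanity-check it on small instances such as $3412$ and $45123$, and against Lemma~\ref{lem:size3pattern} when $k=3$.
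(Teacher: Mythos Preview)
Your argument is correct and follows essentially the same route as the paper: both use $\varphi$, reduce via \eqref{eq:GrassDyck}--\eqref{eq:GrassReduction} to the indecomposable core, and identify the number of peaks of $\U Q\D$ (all at height $\ge 2$) with the length of the trailing run of $\pi_0$, which controls containment of $k12\cdots(k-1)$. The paper is terser---it writes the core explicitly as $\tau_0\,n\,1\,j_2\cdots j_\ell$ and reads off the condition $\ell<k-1$ directly---while you recover the same count through the maximal Lehmer-code entry, but the substance is the same.
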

\begin{proof}
The map $\varphi$ provides the claimed bijection. Observe that because of the discussion around \eqref{eq:GrassDyck} and \eqref{eq:GrassReduction}, it suffices to consider Grassmannian Dyck paths that start with a long ascent and have no peaks at height 1. If $P$ is such a path of semilength $n$ with $\ell$ peaks, then $\varphi(P)$ is a Grassmannian permutation of the form
\[ \varphi(P) =  \tau_0\, n\, 1\,j_2\cdots j_\ell \;\text{ with } j_2<\cdots< j_\ell \text{ and } \abs{\tau_0} = n-\ell-1. \]
Therefore, $\varphi(P)\in \Gr_n(k12\cdots (k-1))$ if and only if $\ell<k-1$.

Suppose now that $\pi\in \Gr_n(k12\cdots (k-1))$ is indecomposable and has $m$ RL minima. Then, the entries $n$ and $1$ make the descent of $\pi$ and we must have $m<k-1$. Hence $\varphi^{-1}(\pi)$ is a Dyck path with $m\le k-2$ peaks at height greater than 1. 
\end{proof}

\begin{prop}
For $k\ge 3$, the elements of $\Gr_n(23\cdots k1)$ are in one-to-one correspondence with the Grassmannian Dyck paths of semilength $n$ and height at most $k-1$.
\end{prop}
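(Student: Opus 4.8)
The plan is to use the bijection $\varphi$ together with the structural description of Grassmannian Dyck paths from \eqref{eq:GrassDyck} and \eqref{eq:GrassReduction}, exactly as in the preceding proposition, but now tracking the \emph{height} of the path rather than the number of peaks above level $1$. As in that proof, the direct-sum reduction \eqref{eq:GrassReduction} lets us restrict attention to the indecomposable case: a Grassmannian Dyck path of the form $\U P_k \D$ corresponds under $\varphi$ to an indecomposable $\pi \in \Gr_{n}$, and prepending/appending copies of $\U\D$ (i.e.\ forming $\id_{\ell_1}\oplus\pi\oplus\id_{\ell_2}$) affects neither containment of the pattern $23\cdots k1$ nor the height of the path. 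So it suffices to show: an indecomposable $\pi \in \Gr_n$ avoids $23\cdots k1$ if and only if the indecomposable Grassmannian Dyck path $\varphi^{-1}(\pi)$ has height at most $k-1$.

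First I would describe the indecomposable case concretely. An indecomposable Grassmannian Dyck path of semilength $n$ is $\U P \D$ where $P$ avoids $\U\U$ except possibly on its first ascent; its height equals $1$ plus the length of that first ascent of $P$ — equivalently, the height $h$ is the length of the maximal run of up-steps starting the whole path. Under $\varphi$, the down-step numbered $1$ is paired with the up-step at the top of this initial long ascent (it is a peak only if $h$ is as small as possible; in general the top of the ascent is followed by a down-step to height $h-1$, then the greedy peak structure). Writing things out, $\pi = \varphi(\U P\D)$ has the form $\pi = \pi(1)\cdots\pi(d)\,|\,1\,\pi(d+2)\cdots\pi(n)$ where $d$ is the descent position, $\pi(1)<\cdots<\pi(d)$ is the set of labels on the initial long ascent, and $\pi(d)=n$. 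The key observation is that the number of elements of the initial increasing run $\pi(1)<\cdots<\pi(d)$ that lie strictly below $n$ and are \emph{not} right-to-left minima — call this $h-1$ — is precisely the height minus one; more usefully, the height of the path equals the number of values among $\{\pi(1),\dots,\pi(d-1)\}=\{\pi(1),\dots,\pi(d-1)\}$ that are larger than $\pi(d+2)$, i.e.\ that "pass over" the descent, plus one. I would make this bookkeeping precise by re-reading the greedy labeling rule: each non-peak up-step on the initial ascent, at height $i$, gets the smallest available label, and these are exactly the entries before the descent that exceed the value $1$ sitting right after it together with the later RL-minima below them.

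With that dictionary, the pattern argument is short. A copy of $23\cdots k1$ in $\pi$ must use the "$1$" role for some right-to-left minimum that is smaller than everything chosen before it; since $\pi$ has a single descent and $\pi(d)=n$, the only way to realize the increasing prefix $23\cdots k$ (of length $k-1$) followed by a smaller element is to take $k-1$ entries from the initial increasing run $\pi(1)<\cdots<\pi(d)$ that all lie above some later entry $\pi(j)=1$ (or above a later RL-minimum). Thus $\pi$ contains $23\cdots k1$ if and only if at least $k-1$ of the entries $\pi(1),\dots,\pi(d)$ exceed some entry occurring to their right, which by the height dictionary is exactly the statement that the initial long ascent has length $\ge k-1$, i.e.\ the path has height $\ge k$. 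Negating, $\pi$ avoids $23\cdots k1$ iff $\varphi^{-1}(\pi)$ has height at most $k-1$, which together with the decomposition step completes the proof.

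The main obstacle I anticipate is getting the height-versus-entries dictionary exactly right: one must verify carefully that the greedy labeling on the initial ascent assigns precisely the entries of $\pi$ that lie before the descent and are "passed over" by later smaller entries, and that no other configuration of an increasing run of length $k-1$ over a smaller entry can arise (for instance, one should rule out using entries after the descent to build the $23\cdots k$ part, which is immediate since after the descent $\pi$ is increasing and all its entries except possibly the leading $1$ are large — but this needs to be said). Once that structural claim about $\varphi$ restricted to indecomposable Grassmannian Dyck paths is nailed down, the pattern-avoidance equivalence and the reduction via \eqref{eq:GrassReduction} are routine.
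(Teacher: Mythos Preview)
Your approach is the paper's: reduce via \eqref{eq:GrassReduction} to the indecomposable case and show there that height $\le k-1$ is equivalent to avoidance of $23\cdots k1$. What needs fixing is your ``dictionary'', which is both overcomplicated and wrong as stated. The clean version (and this is all the paper uses) is simply: an indecomposable Grassmannian Dyck path has the form $P=\U^h\D^{b_0}\U\D^{b_1}\cdots\U\D^{b_\ell}$, and $\varphi(P)=\tau_{h-1}\,1\,\tau_\ell$ with $|\tau_{h-1}|=h-1$, because the top step of the initial ascent is the peak labeled~$1$ while the $h-1$ steps below it are precisely the non-peak up-steps receiving the greedy labels. Hence the descent position is $d=h-1$, full stop. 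Your alternative formulation --- that $h$ equals one plus the number of values among $\pi(1),\dots,\pi(d-1)$ exceeding $\pi(d+2)$ --- is false (take $\pi=25134$: here $d=2$, $h=3$, but your count gives $0+1=1$); that quantity depends on the $b_i$, not only on $h$. It is also unnecessary: since $\pi(d+1)=1$, \emph{every} entry $\pi(1),\dots,\pi(d)$ already exceeds a later entry, so $\pi$ contains $23\cdots k1$ iff $d\ge k-1$ iff $h\ge k$ (and accordingly your ``initial long ascent has length $\ge k-1$'' should read $\ge k$). With the dictionary stated this simply, the argument is immediate and coincides with the paper's.
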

\begin{proof}
Once again, because of \eqref{eq:GrassReduction}, we only need to consider the correspondence between indecomposable permutations and Dyck paths that start with a long ascent and have no peaks at height 1. Such a path of height $h$ must be of the form
\[ P = \U^h\D^{b_0}\U\D^{b_1}\cdots \U\D^{b_\ell} \;\text{ with } h\ge 2 \text{ and } b_i\ge 1, \]
which implies $\varphi(P) = \tau_{h-1}\, 1\,\tau_\ell$ with increasing permutations $\tau_{h-1}$, $\tau_\ell$, and
$\abs{\tau_{h-1}} = h-1$. Clearly, $\varphi(P)\in \Gr_n(23\cdots k1)$ if and only if $h\le k-1$. Conversely, every indecomposable permutation $\pi\in\Gr_n(23\cdots k1)$ must start with an increasing run of size $\ell<k-1$, followed by the entry 1. So, the height of the corresponding Dyck path $\varphi^{-1}(\pi)$ is $\ell+1$ and has therefore height at most $k-1$.
\end{proof}

\bigskip
We finish this section with an interesting connection between the elements of $\Gr_n(35124)$ and certain Schr\"oder paths.

\begin{lem}
We have $\pi\in\Gr_{n+1}(35124)$ if and only if its Lehmer code is of the form 
\begin{equation}\label{eq:35124Lehmer}
 L(\pi) = 0^{j_1} 1^{j_2} m^{j_3} 0^{j_4} =     
 \underbrace{0\cdots 0}_{j_{1}}\; \underbrace{1\cdots 1}_{j_{2}}\; 
 \underbrace{m\cdots m}_{j_{3}}\; \underbrace{0\cdots 0}_{j_{4}}\,,
\end{equation}
where $j_1+\cdots+j_4=n+1$, $j_4>0$, $m\in\{2,\dots,n\}$, and $m\le j_4$.
\end{lem}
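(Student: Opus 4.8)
The plan is to translate everything into the language of the Lehmer code, using the description recalled in the introduction: $\pi\in\Gr_{n+1}$ precisely when its code is weakly increasing up to some position $k$ and identically zero afterwards (take $k=0$ for $\id_{n+1}$). Write the descent of $\pi$, when it has one, at position $k$, let $U=\{\pi(1),\dots,\pi(k)\}$ and $V=\{\pi(k+1),\dots,\pi(n+1)\}$ be the (increasing) value sets of its two runs, and note that $c_i(\pi)=\#\{v\in V: v<\pi(i)\}$ for $i\le k$ while $c_i(\pi)=0$ for $i>k$. Thus $L(\pi)=0^{j_1}w\,0^{j_4}$ where $w$ is weakly increasing with all entries at least $1$, $j_4=\abs V=n+1-k\ge1$, and $c_k=\#\{v\in V:v<\pi(k)\}\le\abs V=j_4\le n$. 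In particular the inequality ``$m\le j_4$'' in the statement will turn out to be automatic: it is just the requirement $c_k\le n+1-k$ that makes the word a genuine Lehmer code.

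The heart of the matter is to decide when such a $\pi$ contains $35124$. Since $35124$ has its unique descent between its second and third letters, any occurrence at positions $a_1<a_2<a_3<a_4<a_5$ must have that descent straddle the descent of $\pi$, forcing $a_2\le k<a_3$; hence $\pi(a_1),\pi(a_2)\in U$ and $\pi(a_3),\pi(a_4),\pi(a_5)\in V$, and the order relations of the pattern read $\pi(a_3)<\pi(a_4)<\pi(a_1)<\pi(a_5)<\pi(a_2)\le\pi(k)$. From this I would extract the criterion \emph{$\pi$ contains $35124$ if and only if there is an index $i\le k$ with $c_i\ge2$ and $c_i<c_k$}: for the forward direction take $i=a_1$, so that the $V$-values $\pi(a_3),\pi(a_4)$ below $\pi(a_1)$ give $c_{a_1}\ge2$ and the $V$-value $\pi(a_5)$ strictly between $\pi(a_1)$ and $\pi(k)$ gives $c_k-c_{a_1}\ge1$; for the backward direction, such an $i$ necessarily has $i<k$ (the code is weakly increasing on $[1,k]$), and choosing $p<q$ among the two smallest $V$-values below $\pi(i)$ and $r$ a $V$-value in $(\pi(i),\pi(k))$, the entries $\pi(i),\pi(k),p,q,r$ read in position order form a $35124$.

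Assembling the pieces finishes the proof. A permutation in $\Gr_{n+1}$ avoids $35124$ iff no index $i\le k$ has $2\le c_i<c_k$; together with $L(\pi)$ being weakly increasing and then zero, this says exactly that every entry of $L(\pi)$ lies in $\{0,1,c_k\}$, i.e.\ $L(\pi)=0^{j_1}1^{j_2}m^{j_3}0^{j_4}$ with $m=c_k\ge2$ (the block $m^{j_3}$ being absent, $j_3=0$, precisely when $c_k\le1$). The remaining constraints $j_1+\dots+j_4=n+1$, $j_4>0$, and $2\le m\le j_4\le n$ were already recorded in the first step, and conversely any word of this shape with $m\le j_4$ is a valid Grassmannian code all of whose entries lie in $\{0,1,m\}$, hence names a permutation avoiding $35124$ by the criterion above. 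I expect the one genuinely delicate point to be the occurrence analysis of the middle paragraph — in particular the claim that the pattern's descent must straddle that of $\pi$, which is what cleanly splits an occurrence into two letters from the first run and three from the second — whereas passing between the code and the shape $0^{j_1}1^{j_2}m^{j_3}0^{j_4}$ is routine bookkeeping.
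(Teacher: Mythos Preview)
Your argument is correct. The key criterion you isolate --- that a Grassmannian $\pi$ contains $35124$ if and only if some $i\le k$ has $2\le c_i<c_k$ --- is sound: the forward direction is exactly the computation the paper performs at positions $i_c$ and $i_e$ in its second paragraph, and your backward direction (building a $35124$ out of $\pi(i),\pi(k)$ and three suitable $V$-values) is a clean construction that the paper does not state explicitly.

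The difference from the paper lies in how the implication ``$\pi$ avoids $35124\Rightarrow L(\pi)$ has the stated shape'' is obtained. The paper argues structurally: it splits into the cases where $\pi$ is the identity, has its descent at position~$1$, avoids $2413$, or contains $2413$, and reads off the Lehmer code from a picture in each case. You instead derive this direction purely from your biconditional criterion, observing that ``no $i$ with $2\le c_i<c_k$'' together with monotonicity forces every code entry into $\{0,1,c_k\}$. Your route is more uniform (everything happens at the level of the code) and avoids the $2413$ case split; the paper's route gives a more visual picture of what the permutations actually look like. Both reach the same destination, and your observation that $m\le j_4$ is just the usual bound $c_k\le n+1-k$ for a valid Lehmer code is a nice way to see why that constraint is not an additional restriction.
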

\begin{proof}
Clearly, $L(\id_{n+1}) = 0^{n+1}$. Suppose $\pi\in\Gr_{n+1}(35124)$ with $\des(\pi)=1$. If $\pi$ has its descent at position 1, then $\pi = (\ell+1)\,1\cdots \ell \cdots$ for some $1\le \ell\le n$, and thus $L(\pi) = \ell\, 0^{n}$. Otherwise, the shape of $\pi$ depends on the avoidance or containment of a $2413$ pattern. More precisely, if $\pi$ does not have its descent at position $1$, then it must be of the form

\begin{center}
\begin{tikzpicture}[scale=0.6]
\begin{scope}
\node[above=1pt] at (2.5,5) {\scriptsize $\pi$ avoids $2413$};
\clip (0.1,0.1) rectangle (4.9,4.9);
\draw[gray] (0,0) grid (5,5);
\draw[mesh] (2,0) rectangle (3,5); 
\draw[dashed,gray,thick] (0,2.5) rectangle (5,2.5); 
\foreach \x/\y in {2/4,3/1}{\draw[fill] (\x,\y) circle (0.12);}
\draw[thick] (0.1,0.1) -- (0.9,0.9);
\draw[thick] (1.1,2.6) -- (2,4);
\draw[thick] (3,1) -- (3.9,2.4);
\draw[thick] (4.1,4.1) -- (4.9,4.9);
\end{scope}
\draw[decorate, decoration = {calligraphic brace, mirror, raise=1pt}, thick] (2.95,0) -- (3.95,0);
\node[below=2pt] at (3.7,0) {\scriptsize $\ell$ elements};

\node at (7,2.5) {or};

\begin{scope}[xshift=250]
\node[above=1pt] at (2.5,5) {\scriptsize $\pi$ contains $2413$};
\clip (0.1,0.1) rectangle (4.9,4.9);
\draw[gray] (0,0) grid (5,5);
\draw[mesh] (0,2) rectangle (1,5); 
\draw[mesh] (1,0) rectangle (2,2); 
\draw[mesh] (2,1) rectangle (3,4); 
\draw[mesh] (3,3) rectangle (4,5); 
\draw[mesh] (4,0) rectangle (5,3);
\draw[meshPatt] (2,0) rectangle (3,1);
\foreach \x/\y in {1/4,3/0}{\draw[mesh] (\x,\y) rectangle (1+\x,1+\y);}
\foreach \x/\y in {1/2,2/4,3/1,4/3}{\draw[meshMost] (\x,\y) rectangle (1+\x,1+\y);}
\foreach \x/\y in {1/2,2/4,3/1,4/3}{\draw[fill] (\x,\y) circle (0.12);}
\draw[thick] (0.1,0.1) -- (1,2);
\draw[thick] (1.1,3.1) -- (2,4);
\draw[thick] (3.1,2.1) -- (4,3);
\draw[thick] (4.1,4.1) -- (4.9,4.9);
\draw[fill=white] (0.53,1) circle (0.12);
\end{scope}
\draw[decorate, decoration = {calligraphic brace, mirror, raise=1pt},thick] (11.7,0) -- (12.85,0);
\node[below=2pt] at (12.4,0) {\scriptsize $m$ elements};
\end{tikzpicture}
\end{center}
leading to the codes $0^{k_1} \ell^{k_2} 0^{k_3}$ with $1\le \ell\le k_3$ or $0^{k_1} 1^{k_2} m^{k_3} 0^{k_4}$ with $2\le m\le k_4$. Combining all the above cases we get that, if $\pi\in\Gr_{n+1}(35124)$, then $L(\pi)$ is of the form \eqref{eq:35124Lehmer}.

On the other hand, if $\pi$ contains a $35124$ pattern, then it must be of the form
\[ \pi = \tau_1\, c\, \tau_2\, e\, \tau_3\, a\, \tau_4\, b\, \tau_5\, d\, \tau_6 \text{ with } a<b<c<d<e, \]
where every $\tau_i$ is either empty or $21$-avoiding, and the elements of $\tau_2$ (if any) are all larger than $c$. Let $i_a, i_b, i_c, i_d ,i_e$ denote the positions of the entries $a, b, c, d, e$, respectively. Note that $(i_c,i_a)$ and $(i_c,i_b)$ are inversions, so the Lehmer code at position $i_c$ is some $\ell_c\ge 2$. Moreover, $(i_e,i_d)$ is an inversion, and if $(i_c,j)$ is an inversion, so is $(i_e,j)$. Therefore, the code at position $i_e$ is some $\ell_e>\ell_c$. In conclusion, the containment of a $35124$ pattern guarantees a Lehmer code $L(\pi)$ with at least two different letters greater than $1$, hence it is not of the form \eqref{eq:35124Lehmer}.
\end{proof}

\medskip
A Schr\"oder path of semilength $n$ is a lattice from $(0,0)$ to $(2n,0)$ using steps $\U=(1,1)$, $\D=(1,-1)$, and $\H=(2,0)$, never going below the $x$-axis. To each of these paths we can associate a word over the alphabet $\{\U,\D,\H\}$ with $\val(\U)=1$, $\val(\D)=-1$, and $\val(\H)=0$. A {\em Schr\"oder word} is a word $w$ over that alphabet such that $\val(w) = 0$, and if $w=uv$, then $\val(u) \geq 0$. Note that if $w$ corresponds to a Schr\"oder path of semilength $n$, then the number of letters in $w$ must satisfy $\#\U + \#\D + 2(\#\H) = 2n$.

We let $\Schr_{n}(\U\U\D\D)$ be the set of Schr\"oder words $w$ such that
\begin{enumerate}[\qquad (i)]
\item $w = uv \implies \val(u) \in \{0,1\}$,
\item $(\#\U\text{ in } w) \le 2$.
\end{enumerate}

Conditions (i) and (ii) imply that every element of $\Schr_{n}(\U\U\D\D)$ must be of the form
\[ w=\H^n \;\text{ or }\; w = w_{1}\U w_{2}\D w_{3} \;\text{ or }\; w = w_{1}\U w_{2}\D w_{3}\U w_{4}\D w_{5}, \]
where each $w_{i}$ is either empty or consists of $\H$ steps only. In other words, $\Schr_{n}(\U\U\D\D)$ is the set of Schr\"oder words that avoid the pattern, $\U\U\D\D$, not necessarily consecutively. For more on pattern avoiding Schr\"oder words, see \cite{CioFer17}.

For $w=u_1\cdots u_\ell \in \Schr_{n}(\U\U\D\D)$ with $u_i\in\{\U,\D,\H\}$, we let $\bin(w)$ be the binary word defined by
\[ \bin(w)_j = \val(u_1\cdots u_j)  \text{ for every } j\in\{1,\dots,\ell\}. \]
For example,
\begin{equation*}
\bin(\H\H\H\H\H) = 00000, \hspace{6pt}
\bin(\H\U\H\H\D\H) = 011100, \hspace{6pt}
\bin(\U\H\D\H\U\D\H) = 1100100.
\end{equation*}

In general, for $w\in \Schr_{n}(\U\U\D\D)$, we have
\begin{equation*}
 \abs{\bin(w)} = n + \#\U,
\end{equation*}
where $\#\U$ is $0$, $1$ or $2$. In particular, $w=\H^n$ is the only word of length $n$ in $\Schr_{n}(\U\U\D\D)$. As stated above, if $w$ has only one $\U$, then
\[ w = w_{1}\U w_{2}\D w_{3}  \;\text{ and therefore } \bin(w) = 0^{i_1} 1^{i_2} 0^{i_3}, \]
where $i_{1} = \abs{w_{1}}$, $i_{2} = \abs{\U w_{2}}$, $i_{3} = \abs{\D w_{3}}$, and $i_{1} + i_{2} + i_{3} = n+1$.

Finally, if $w$ has two $\U$'s, then
\[    w = w_{1}\U w_{2}\D w_{3}\U w_{4}\D w_{5} \;\text{ and so } \bin(w) = 0^{i_1} 1^{i_2} 0^{i_3} 1^{i_4} 0^{i_5}, \]
where $i_{1} = \abs{w_{1}}$, $i_{2} = \abs{\U w_{2}}$, $i_{3} = \abs{\D w_{3}}$, $i_{4} = \abs{\U w_{4}}$, $i_{5} = \abs{\D w_{5}}$, and $i_{1} + \dots + i_{5} = n+2$.

\begin{prop}
The set $\Schr_n(\U\U\D\D)$ is in bijection with $\Gr_{n+1}(35124)$.
\end{prop}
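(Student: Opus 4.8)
The plan is to pass to Lehmer codes via the lemma just proved and then to match codes with the binary encodings $\bin(w)$ of the Schr\"oder words. First I would record that, by the discussion preceding the proposition, $w\mapsto\bin(w)$ is a bijection from $\Schr_n(\U\U\D\D)$ onto the set of binary words of exactly three shapes: the word $0^n$; the words $0^{i_1}1^{i_2}0^{i_3}$ with $i_1\ge0$, $i_2,i_3\ge1$ and $i_1+i_2+i_3=n+1$; and the words $0^{i_1}1^{i_2}0^{i_3}1^{i_4}0^{i_5}$ with $i_1\ge0$, $i_2,i_3,i_4,i_5\ge1$ and $i_1+\dots+i_5=n+2$. (Injectivity is clear, since $w$ is recovered from the run lengths of $\bin(w)$.) On the other side, the lemma describes the Lehmer codes of $\Gr_{n+1}(35124)$ as the words $0^{j_1}1^{j_2}m^{j_3}0^{j_4}$ with $j_1+\dots+j_4=n+1$, $j_4>0$, and $2\le m\le j_4$ whenever $j_3\ge1$; I would split this target set according to whether $j_2=j_3=0$ (only $0^{n+1}$), $j_3=0<j_2$ (the codes $0^{j_1}1^{j_2}0^{j_4}$ with $j_2,j_4\ge1$), or $j_3\ge1$ (in which case $j_4\ge m\ge2$ automatically).

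I would then build the bijection casewise, matching the three pieces on each side. The word $\H^n$ (that is, $\bin=0^n$) corresponds to $\id_{n+1}$, whose Lehmer code is $0^{n+1}$. A Schr\"oder word with a single $\U$ and $\bin(w)=0^{i_1}1^{i_2}0^{i_3}$ corresponds to the permutation with Lehmer code $0^{i_1}1^{i_2}0^{i_3}$: this code already has the required form (with $j_3=0$), and as $(i_1,i_2,i_3)$ ranges over all admissible triples we get precisely the codes $0^{j_1}1^{j_2}0^{j_4}$ with $j_2,j_4\ge1$. The only delicate case is a word with two $\U$'s, where $\bin(w)$ has length $n+2$ while a Lehmer code has length $n+1$, so one letter must be absorbed. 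I would send $\bin(w)=0^{i_1}1^{i_2}0^{i_3}1^{i_4}0^{i_5}$ to the Lehmer code $0^{i_1}1^{i_2-1}(i_3+1)^{i_4}0^{i_3+i_5}$ and then check: the length is $i_1+(i_2-1)+i_4+(i_3+i_5)=n+1$; the exponent $j_3=i_4\ge1$; the value $m=i_3+1$ satisfies $2\le i_3+1\le i_3+i_5=j_4$ because $i_5\ge1$; and $j_2=i_2-1\ge0$ since $i_2\ge1$. Hence the image lies in the $j_3\ge1$ family. The inverse on this piece sends $0^{j_1}1^{j_2}m^{j_3}0^{j_4}$ with $j_3\ge1$ to the Schr\"oder word whose binary code is $0^{j_1}1^{j_2+1}0^{m-1}1^{j_3}0^{j_4-m+1}$, and one verifies by direct substitution that the two maps are mutually inverse.

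Finally I would note that the three cases partition $\Schr_n(\U\U\D\D)$ by the number of $\U$-steps, that their images partition the Lehmer codes of $\Gr_{n+1}(35124)$ by the trichotomy on $(j_2,j_3)$, and therefore that the casewise rule is a well-defined bijection; since a permutation is determined by its Lehmer code, this transports to the claimed bijection $\Schr_n(\U\U\D\D)\to\Gr_{n+1}(35124)$. The main obstacle is purely combinatorial bookkeeping in the two-$\U$ case: one must arrange the formula so that the length drops by exactly one and the inequality $m\le j_4$ is automatic from $i_5\ge1$. Once the assignment $0^{i_1}1^{i_2}0^{i_3}1^{i_4}0^{i_5}\mapsto 0^{i_1}1^{i_2-1}(i_3+1)^{i_4}0^{i_3+i_5}$ has been identified, all remaining checks are routine. (Alternatively, the same data could be read directly off the permutation from its non-fixed entries, but the Lehmer-code route is cleanest given the lemma.)
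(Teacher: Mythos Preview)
Your proposal is correct and follows essentially the same route as the paper: both proofs compose $\bin$ with the Lehmer-code map, split into the three cases according to the number of $\U$'s, and use exactly the assignment $0^{i_1}1^{i_2}0^{i_3}1^{i_4}0^{i_5}\mapsto 0^{i_1}1^{i_2-1}(i_3+1)^{i_4}0^{i_3+i_5}$ in the two-$\U$ case. Your write-up is in fact slightly more careful than the paper's in verifying that the image lands in the correct family and that the three pieces partition both sides, but there is no substantive difference in approach.
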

\begin{proof}
For $w \in \Schr_{n}(\U\U\D\D)$, we define $\alpha$ by
\begin{equation*}
  \alpha(w) = 
  \begin{cases}
  0^{n+1} & \text{ if $w$ has no } \U, \\
  \bin(w) & \text{ if $w$ has only one } \U, \\
  0^{i_1}1^{i_2-1}(i_3+1)^{i_4}0^{i_3+i_5}& \text{ if } \bin(w) = 0^{i_1} 1^{i_2} 0^{i_3} 1^{i_4} 0^{i_5}. 
  \end{cases}
\end{equation*}
For example,
\begin{align*}
\alpha(\H\H\H\H\H) &= 000000, \\
\alpha(\H\U\H\H\D\H) &= 011100, \text{ and} \\
\alpha(\U\H\D\H\U\D\H) &= 130000 \,\text{ since } \bin(w) = 1100100.  
\end{align*}
By definition, $\alpha(w)$ is a word of length $n+1$ of the form \eqref{eq:35124Lehmer}, hence it is the Lehmer code of a permutation in $\Gr_{n+1}(35124)$. In fact, the map $L^{-1}\circ \alpha: \Schr_n(\U\U\D\D)\to \Gr_{n+1}(35124)$ is a bijection whose inverse is defined as follows.
 
If $\pi\in \Gr_{n+1}(35124)$, then $L(\pi) = 0^{j_1} 1^{j_2} m^{j_3} 0^{j_4}$ with $j_1+\cdots+j_4=n+1$, $m\in\{2,\dots,n\}$, $j_4>0$, and $m\le j_4$. If $j_2=j_3=0$, then $L(\pi) = 0^{n+1}$ and so $\alpha^{-1}(L(\pi)) = \H^{n}$. If $j_3=0$ and $j_2>0$, then $L(\pi) = 0^{j_1} 1^{j_2} 0^{j_4}$ and we have $\alpha^{-1}(L(\pi)) =\H^{j_1}\U\H^{j_2-1}\D\H^{j_4-1}$. Lastly, if $j_3>0$, then $\alpha^{-1}(L(\pi))= \H^{j_1}\U \H^{j_2}\D \H^{m-2}\U \H^{j_3-1}\D \H^{j_4-m}$. 
\end{proof}

\section{Odd \& Even Grassmannian permutations}
\label{sec:evenoddPerms}

A permutation is said to be {\em even} if it has an even number of inversions (occurrences of the pattern 21); otherwise the permutation is said to be {\em odd}. In this section, we give some enumerative results concerning odd and even Grassmannian permutations.

\begin{thm} 
If $\Gr^{odd}_n$ is the set of odd permutations in $\Gr_n$, and $a_n=\abs{\Gr^{odd}_n}$, then
\begin{gather*}
 a_1 = 0, \; a_2=1, \text{ and} \\
 a_n = 2a_{n-2}+2^{n-2} \;\text{ for } n>2. 
\end{gather*}
\end{thm}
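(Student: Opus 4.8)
The plan is to partition $\Gr^{odd}_n$ according to the structure of the permutation, mirroring the combinatorial decomposition used throughout the paper, and to track parity carefully. Recall that every $\pi\in\Gr_n$ arises from a subset $A\subseteq[n]$ by listing $A$ in increasing order followed by its complement in increasing order, and that the number of inversions of such a $\pi$ equals the number of pairs $(a,b)$ with $a\in A$, $b\notin A$, and $a>b$. So I would set up a generating-function-free recursion by considering the largest element $n$: either $n\in A$, in which case $n$ is placed somewhere and contributes $\#\{b\notin A\}$ inversions, or $n\notin A$, in which case $n$ sits at the very end and contributes nothing. A cleaner route, and the one I would actually pursue, is to look at how $\pi\in\Gr_n$ relates to a permutation in $\Gr_{n-2}$ by removing two well-chosen entries whose removal changes the inversion count by a controlled (even) amount; this is what the recursion $a_n = 2a_{n-2} + 2^{n-2}$ suggests, since $2^{n-2}$ is half of $|\Gr_{n-1}|$-ish — more precisely $2^{n-2}$ counts certain subsets of an $(n-2)$-element ground set.

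The key steps, in order, would be: (1) establish the base cases $a_1=0$ (the only permutation $1$ is even) and $a_2=1$ (the permutation $21$ is odd, $12$ is even); (2) classify $\pi\in\Gr_n$ by whether $1$ appears in the first position and whether $n$ appears in the last position — giving four cases $\id_n$ excluded appropriately — and observe that prepending a fixed $1$ or appending a fixed $n$ does not change the inversion parity, which lets one reduce to a ``core'' situation; (3) for the reduction to $\Gr_{n-2}$, take $\pi\in\Gr_n$ not of the reducible boundary type and relate it to a permutation of $[n-2]$ by deleting both the entry $1$ and the entry $n$ (after standardization), showing the deletion removes an \emph{even} number of inversions, hence parity is preserved — this gives the $2a_{n-2}$ term (the factor $2$ coming from the two essentially distinct ways the deletion can be inverted, e.g. reinserting $n$ into the first ascending run versus after the descent); (4) count the ``new'' odd permutations that do not come from this reduction, i.e. those whose only descent is at position $1$ (so $\pi = (\ell+1)\,1\,2\cdots\ell\,(\ell+2)\cdots n$ type, or more generally $\pi$ determined by a subset), and show exactly $2^{n-2}$ of them are odd, by a direct parity count on subsets of $[n-2]$ (the inversion number of $i\,1\cdots(i-1)(i+1)\cdots n$ is $i-1$, and summing a pairing/involution argument over the remaining $2^{n-2}$ configurations gives an even split plus the needed correction).

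The main obstacle I anticipate is step (3)–(4): getting the bookkeeping exactly right so that the boundary cases (the identity, permutations with descent at position $1$, permutations starting with $1$, permutations ending with $n$) are each counted once and with the correct parity, and verifying that the deletion map in step (3) is genuinely two-to-one onto $\Gr^{odd}_{n-2}$ and does not silently include or exclude fixed points of the reduction. The cleanest way to defuse this is probably to prove the companion statement for even Grassmannian permutations simultaneously — writing $e_n = |\Gr_n| - a_n = 2^n - n - a_n$ — and check that the claimed recursion is consistent with $|\Gr_n| = |\Gr_{n-1}| + |\{\pi : \text{new}\}|$-type identities, so that any off-by-one in the odd count would be exposed as an inconsistency in the even count. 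Once the decomposition $\Gr^{odd}_n = (\text{two copies of }\Gr^{odd}_{n-2}) \;\dot\cup\; (\text{the }2^{n-2}\text{ new odd ones})$ is pinned down, the recursion $a_n = 2a_{n-2} + 2^{n-2}$ follows immediately.
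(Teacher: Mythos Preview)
Your plan has a genuine gap at step~(3). The claim that deleting both the entry~$1$ and the entry~$n$ from $\pi\in\Gr_n$ removes an \emph{even} number of inversions is false. If $\pi$ has its descent at position~$k$, then $1$ sits either at position~$1$ (contributing $0$ inversions) or at position~$k+1$ (contributing $k$), while $n$ sits either at position~$n$ (contributing $0$) or at position~$k$ (contributing $n-k$). Deleting both therefore removes $0$, $k$, $n-k$, or $n-1$ inversions depending on the case, and these are not all even. Concretely, for $n=4$ the six odd Grassmannian permutations are $2134$, $4123$, $1324$, $2413$, $1243$, $2341$; deleting $1$ and $4$ and standardizing sends five of them to $12$ (even) and only $1324$ to $21$ (odd). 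So the map is neither parity-preserving nor $2$-to-$1$ onto $\Gr^{odd}_{n-2}$.

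The underlying deletion map is not uniformly $4$-to-$1$ either: while each non-identity $\pi'\in\Gr_{n-2}$ has exactly four preimages in $\Gr_n$, the identity $\id_{n-2}$ has $3n-4$ preimages, since with no pre-existing descent there are many more Grassmannian ways to insert $1$ and $n$. So the decomposition you sketch cannot separate cleanly into ``$2a_{n-2}$ from lifts'' plus ``$2^{n-2}$ new ones''. Your step~(4) does not repair this: there are only $n-1$ Grassmannian permutations with descent at position~$1$, and exactly $\lfloor n/2\rfloor$ of them are odd, not $2^{n-2}$.

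The paper avoids all of this by not jumping directly from $n$ to $n-2$. It treats even and odd $n$ separately, proving the pair of relations $a_{2m+1}=2a_{2m}$ and $a_{2m+2}=a_{2m+1}+2^{2m}$ and then combining them. For the first, a bijection $\xi:\Gr^{odd}_{2m}\to\{\pi\in\Gr^{odd}_{2m+1}:\pi(2m+1)\neq 2m+1\}$ either prepends a $1$ or inserts the adjacent pair $(2m{+}1),1$ at the descent, adding exactly $2m$ inversions. For the second, $\Gr^{odd}_{2m+2}$ is split by the \emph{parity of the descent position}: a bijection $\psi$ shows the even-position part has size $a_{2m+1}$, and a direct subset-parity count shows exactly $2^{2m}$ permutations have their descent at an odd position. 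The crucial point is that the parity of $n$ is precisely what forces these insertions to change $\inv(\pi)$ by an even amount --- exactly the control your step~(3) lacks.
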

\begin{proof}
We start by proving the relations
\begin{equation}\label{eq:EvenOdd_split}
 a_{2m+1} = 2a_{2m} \;\text{ and }\; a_{2m+2} = a_{2m+1} + 2^{2m} \text{ for } m\ge 1. 
\end{equation}
For the first relation, we write $\Gr^{odd}_{2m+1} = \A \cup (\Gr^{odd}_{2m+1}\setminus \A)$, where $\A$ is the set of permutations in $\Gr^{odd}_{2m+1}$ that do not end with $2m+1$. Clearly, $\abs{\Gr^{odd}_{2m+1}\setminus \A} = \abs{\Gr^{odd}_{2m}}=a_{2m}$.

Consider the map $\xi:\Gr^{odd}_{2m} \to \A$ defined as follows. If $\pi\in \Gr^{odd}_{2m}$ does not end with $2m$, we let $\xi(\pi) = 1\oplus \pi$, which is in $\A$. Otherwise, if $\pi\in \Gr^{odd}_{2m}$ ends with $2m$, then we remove $2m$ from $\pi$, shift the remaining elements up by one, and insert the pair $(2m+1),1$ at the descent of $\pi$. For example,
\[ 351246 \to 35124 \to 46235 \to 46{\bf 71}235, \]
so $\xi(351246) = 4671235$. Observe that if the descent of $\pi$ is at position $j$, then the insertion of $1$ creates $j$ new inversions, and the insertion of $2m+1$ creates $2m-j$ new inversions. In other words, $\xi(\pi)$ has $2m$ more inversions than $\pi$ and has therefore the same parity.
The map $\xi$ is clearly bijective. Thus $\abs{\A}=\abs{\Gr^{odd}_{2m}}=a_{2m}$, and so $a_{2m+1}=2a_{2m}$.

In order to prove the second formula in \eqref{eq:EvenOdd_split}, we now consider the set $\E$ of permutations in $\Gr^{odd}_{2m+2}$ having their descent at even position. There is a bijection $\psi: \Gr^{odd}_{2m+1}\to \E$ defined as follows. If $\pi\in \Gr^{odd}_{2m+1}$ has its descent at even position, we let $\psi(\pi)=\pi\oplus 1$. Otherwise, if $\pi$ has its descent at odd position, $\psi(\pi)$ is the permutation obtained by inserting $2m+2$ at the descent of $\pi$. For example, $\psi(35124) = 35124{\bf 6}$ and $\psi(24513) = 245{\bf 6}13$. 

Thus $\abs{\E}=a_{2m+1}$. It remains to verify that the set $\Gr^{odd}_{2m+2}\setminus\E$ of permutations in $\Gr^{odd}_{2m+2}$, having their descent at odd position, has $2^{2m}$ elements. We proceed with a direct count.

Any permutation $\pi\in \Gr^{odd}_{2m+2}$ with descent at position $2k+1$ must be of the form
\[ \pi = i_1\cdots i_{2k+1} \,|\, i_{2k+2}\cdots i_{2m+2} \]
where $i_1,\dots, i_{2k+1}$, and  $i_{2k+2},\dots, i_{2m+2}$, are increasing sequences and $i_{2k+1}>i_{2k+2}$. The number of inversions of such a permutation is given by
\begin{align*} 
\inv(\pi) &= (i_1-1)+(i_2-2)+(i_{2k+1}-2k-1) \\
 &= (i_1+\cdots+i_{2k+1}) - (2k+1)(k+1) \\
 &\equiv  (i_1+\cdots+i_{2k+1}) - k - 1 \pmod 2,
\end{align*}
so $\pi$ is odd if and only if $i_1+\cdots+i_{2k+1}\equiv k \pmod 2$. Let
\[ \mathcal{D}_{\delta} = \Big\{ A\subset \{1,\dots,2m+2\}\!: \abs{A}=2k+1 \text{ and } \sum_{a\in A} a \equiv \delta\hspace{-1ex} \pmod 2\Big\}. \]
Clearly, $\abs{\mathcal{D}_0} + \abs{\mathcal{D}_1} = \binom{2m+2}{2k+1}$. Moreover, $\mathcal{D}_0\cong\mathcal{D}_1$ by means of the map 
\[ \{i_1,\dots,i_{2k+1}\} \mapsto  \{2m+3-i_1,\dots,2m+3-i_{2k+1}\}. \]
In other words, $\abs{\mathcal{D}_0}=\abs{\mathcal{D}_1}=\frac12 \binom{2m+2}{2k+1}$ gives the number of permutations in $\Gr^{odd}_{2m+2}$ having their descent at position $2k+1$. Therefore,
\[ \abs{\Gr^{odd}_{2m+2}\setminus\E} = \sum_{k=0}^{m} \frac12 \binom{2m+2}{2k+1} = 2^{2m}. \]
This finishes the proof of \eqref{eq:EvenOdd_split}. As a consequence, we have 
\[ a_{2m+2} = 2a_{2m} + 2^{2m} \;\text{ and }\; a_{2m+1} = 2(a_{2m-1} + 2^{2m-2}) = 2a_{2m-1} + 2^{2m-1}, \]
which combined give the claimed formula for $a_n=\abs{\Gr^{odd}_n}$.
\end{proof}

\begin{remark}
The sequence $a_n=\abs{\Gr^{odd}_n}$ starts with $0, 1, 2, 6, 12, 28, 56, 120, 240, 496,\dots,$ and it satisfies the recurrence relation $a_n=2a_{n-1}+2a_{n-2}-4a_{n-3}$ for $n>3$, cf.~\oeis{A122746}.
\end{remark}

\begin{cor}
If $b_n=\abs{\Gr^{even}_n}$, then
\begin{gather*}
 b_1 = 1, \; b_2=1, \text{ and} \\
 b_n = 2b_{n-2}+2^{n-2}+n-4 \;\text{ for } n>2. 
\end{gather*}
\end{cor}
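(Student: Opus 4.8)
The plan is to derive the formula for $b_n=\abs{\Gr^{even}_n}$ directly from the corresponding result for $a_n=\abs{\Gr^{odd}_n}$, using the obvious identity $a_n+b_n=\abs{\Gr_n}=2^n-n$. From the theorem just proved, $a_n=2a_{n-2}+2^{n-2}$ for $n>2$, so I would write $b_n = (2^n-n) - a_n = (2^n-n) - 2a_{n-2} - 2^{n-2}$ and substitute $a_{n-2} = \abs{\Gr_{n-2}} - b_{n-2} = (2^{n-2}-(n-2)) - b_{n-2}$. This gives
\[
 b_n = (2^n-n) - 2\big(2^{n-2}-(n-2)-b_{n-2}\big) - 2^{n-2} = 2b_{n-2} + 2^n - 2^{n-1} - 2^{n-2} + (-n+2n-4),
\]
and since $2^n-2^{n-1}-2^{n-2}=2^{n-2}$, this collapses to $b_n = 2b_{n-2} + 2^{n-2} + n - 4$, as claimed. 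This substitution step is the entire content of the argument; there is no real obstacle, only bookkeeping of powers of two.

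For the base cases, I would note that $\Gr_1=\{1\}$ consists of a single even permutation (zero inversions), so $b_1 = 1$ (and $a_1=0$), and $\Gr_2=\{12,21\}$ contains one even permutation ($12$) and one odd permutation ($21$), so $b_2 = 1$ (and $a_2 = 1$), matching the stated initial values. Since the recurrence for $a_n$ is valid for all $n>2$ and $\abs{\Gr_m}=2^m-m$ holds for all $m\ge 1$, the derived recurrence for $b_n$ is likewise valid for all $n>2$, which completes the proof.

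I would present this as a short paragraph rather than a formal displayed induction, since the recurrence for $b_n$ follows termwise from the already-established recurrence for $a_n$ together with the complementary counting identity; no separate induction is needed once one observes that $b_{n-2}$ and $a_{n-2}$ are genuinely complementary in $\Gr_{n-2}$. If one wants a fully self-contained alternative, the same direct count used in the theorem (splitting permutations with descent at even versus odd position and analyzing the parity of $\inv(\pi)$) goes through verbatim with the roles of even and odd interchanged, but piggybacking on the odd case via $a_n+b_n=2^n-n$ is cleaner and is the route I would take.
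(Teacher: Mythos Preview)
Your proof is correct and is precisely the intended derivation: the paper states this result as a corollary of the preceding theorem without writing out a proof, so the expected argument is exactly the complementary-counting computation $b_n=(2^n-n)-a_n$ combined with the recurrence $a_n=2a_{n-2}+2^{n-2}$, which you carry out cleanly.
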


\begin{remark}
From the previous equations, it follows that
\begin{equation*}
 \abs{\Gr^{odd}_n} = 2^{n-1} - 2^{\lfloor \frac{n-1}{2}\rfloor} \;\text{ and }\;
 \abs{\Gr^{even}_n} = 2^{n-1} + 2^{\lfloor \frac{n-1}{2}\rfloor} - n.
\end{equation*}
\end{remark}

\medskip
We conclude this section with a parity classification of Grassmannian Dyck paths.
\begin{prop}
The set $\Gr^{odd}_n$ is in bijection to the set of Grassmannian Dyck paths of semilength $n$ having an odd number of peaks at even height. Moreover, the elements of $\Gr^{even}_n$ correspond to Grassmannian Dyck paths with an even number of peaks at even height.
\end{prop}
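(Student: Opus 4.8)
The plan is to show that the bijection $\varphi$ of the earlier proposition already respects parity, i.e.\ to prove
\[ \inv(\pi)\equiv \#\{\text{peaks of }\varphi^{-1}(\pi)\text{ at even height}\}\pmod 2\qquad\text{for every }\pi\in\Gr_n; \]
granting this, $\varphi$ restricts to both asserted bijections (the one for $\Gr^{even}_n$ by complementation). First I would reduce to indecomposable permutations. By \eqref{eq:GrassReduction}, any Grassmannian Dyck path other than $(\U\D)^n$ equals $(\U\D)^{\ell_1}\,\U P_k\D\,(\U\D)^{\ell_2}$ and $\varphi$ maps it to $\id_{\ell_1}\oplus\pi_k\oplus\id_{\ell_2}$. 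Since $\inv(\id_{\ell_1}\oplus\pi_k\oplus\id_{\ell_2})=\inv(\pi_k)$ and the $\ell_1+\ell_2$ peaks coming from the blocks $(\U\D)^{\ell_i}$ all lie at height $1$, discarding those blocks changes neither side of the congruence; the identity path contributes $\inv=0$ and no even-height peak. So it suffices to treat an indecomposable non-identity $\pi\in\Gr_m$, whose path has the form $\U^h\D^{b_1}\U\D^{b_2}\cdots\U\D^{b_k}$ with $h\ge 2$.

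Next I would write the two sides explicitly. Indecomposability forces $\pi$ into the first shape from the introduction with $\pi(1)\ge 2$ and $\pi(m)\ne m$, i.e.\ $\pi=s_1\cdots s_p\,1\,t_1\cdots t_{m-p-1}$ with $2\le s_1<\cdots<s_p=m$, increasing $t_j$'s, and $\{s_i\}\sqcup\{1\}\sqcup\{t_j\}=[m]$. Counting inversions directly gives $\inv(\pi)=p+\sum_j\#\{i:s_i>t_j\}$, which, using $\{s_i\}\sqcup\{t_j\}=\{2,\dots,m\}$, rewrites explicitly in terms of $p$, of $k:=m-p$ (the number of right-to-left minima of $\pi$, hence of peaks of the path), and of $\Sigma:=\sum_j t_j$. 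On the path side, the description of $\varphi^{-1}$ gives $h=p+1$ and $b_r=t_r-t_{r-1}$ (with the conventions $t_0:=1$, $t_k:=m+1$), so the $r$-th peak has height $h_r=h-(b_1+\cdots+b_{r-1})+(r-1)$ and $\sum_r h_r$ is again an explicit polynomial in $m$, $k$, $\Sigma$. Since $\#\{r:h_r\text{ even}\}\equiv k+\sum_r h_r\pmod 2$, the desired congruence reduces to an identity between two polynomials in $m,k,\Sigma$ modulo $2$: the $\Sigma$-terms cancel, the $m$-terms cancel, and the constant left over is $k^2-k+1-(k^2-k+1)=0$.

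The only delicate point, though purely elementary, is the bookkeeping behind those two formulas: one must observe that the sole non-peak up-steps of a Grassmannian Dyck path are the bottom $h-1$ up-steps of its long ascent, carry the greedy labelling of $\varphi$ only far enough to see that the peak labels are $1,\,b_1+1,\,b_1+b_2+1,\dots$, and track the parities of the binomial-type terms $\binom{k-1}{2}$ and $\tfrac{k(k+1)}{2}$ that arise. This is where a sign or parity slip is most likely, so I would sanity-check the congruence on all Grassmannian Dyck paths of semilength at most $4$ (for instance $\U^3\D^2\U\D^2\mapsto 2413$, which has one even-height peak and $\inv=3$, both odd). Once the congruence is established, partitioning the Grassmannian Dyck paths of semilength $n$ by the parity of their number of even-height peaks yields both statements at once.
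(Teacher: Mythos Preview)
Your proposal is correct and uses the same bijection $\varphi$ together with the same reduction to the indecomposable case as the paper. The execution differs in one pleasant way that you might want to adopt: instead of computing $\inv(\pi)$ and $k+\sum_r h_r$ separately as polynomials in $m,k,\Sigma$ and matching them modulo~$2$, the paper observes that the two quantities agree \emph{exactly}. Concretely, if the $r$-th peak (corresponding to the RL~minimum $t_{r-1}$, with $t_0=1$) sits at height $h_r$, then the number of inversions $(i,j)$ with $j$ at that position is precisely $h_r-1$; summing over all peaks gives $\inv(\pi)=\sum_r(h_r-1)$, and the parity statement is then immediate since $h_r-1$ is odd exactly when $h_r$ is even. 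Your own formulas already contain this: with your notation, $h_r-1=p-t_{r-1}+r$, which coincides term by term with your count $\#\{i:s_i>t_{r-1}\}$ (and equals $p$ when $r=1$). Recognising this equality replaces the mod-$2$ bookkeeping you flag as ``the only delicate point'' by a one-line observation and avoids any risk of a parity slip.
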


\begin{proof}
We start with an example. Consider the odd Grassmannian permutation 
\[ \pi = 2\;3\;5\;7\;8\;[11]\;1\;4\;6\;9\;[10], \] 
which corresponds (by means of the map $\varphi$ from Section~\ref{sec:bijections}) to the Dyck path

\medskip
\begin{center}
\begin{tikzpicture}[scale=0.45]
\scriptsize
\dyckpath{0,0}{11}{1,1,1,1,1,1,1,0,0,0,1,0,0,1,0,0,0,1,0,1,0,0};
\foreach \x/\y in {7/7,11/5,14/4,18/2}{
	\draw[dotted] (\x,\y) -- (22,\y) node[right=1pt,blue] {\y};
};
\draw[red] (6.7,6.8) node[left=1pt] {1};
\draw[red] (10.7,4.8) node[left=1pt] {4};
\draw[red] (13.7,3.8) node[left=1pt] {6};
\draw[red] (17.7,1.8) node[left=1pt] {9};
\draw[red] (19.7,1.8) node[left=1pt] {10};
\draw (0.7,0.8) node[left=1pt] {2};
\draw (1.7,1.8) node[left=1pt] {3};
\draw (2.7,2.8) node[left=1pt] {5};
\draw (3.7,3.8) node[left=1pt] {7};
\draw (4.7,4.8) node[left=1pt] {8};
\draw (5.7,5.8) node[left=1pt] {11};
\end{tikzpicture}
\end{center}
Observe that $(i,j)$ is an inversion of $\pi$ if and only if the corresponding Dyck path $\varphi^{-1}(\pi)$ has the label $\pi(i)$ on its long ascent, and $\pi(j)$ is on a peak. Moreover, if a peak with label $\pi(j)$ is at height $h$, then $j$ is part of $h-1$ inversions. For instance, in the above path, label $1$ (height 7) contributes to six inversions, label $4$ (height 5) gives four inversions, label $6$ (height 4) gives three inversions, and labels $9$, $10$ (height 2) give one inversion each. 

In general, if a Grassmannian Dyck path $\varphi^{-1}(\pi)$ has $k$ peaks, say $p_1,\dots,p_k$, then the number of inversions of $\pi$ is given by
\[ \inv(\pi) = \sum_{i=1}^k (\textup{height}(p_i)-1) \equiv \#\{\text{peaks at even height}\} \!\!\pmod 2, \]
since peaks at an odd height contribute to an even number of inversions.
\end{proof}

\section*{Acknowledgement}
We thank Michael Weiner for many energizing and helpful discussions, and for carefully reading our manuscript. We are also very grateful to the Altoona Summer Undergraduate Research Fellowship that made it possible for Tomasko to dedicate herself to this project. 


\end{document}